\newtheorem{Thm}{Theorem}
\newtheorem{Lem}{Lemma}
\newcommand{\dia}{$\diamondsuit$ }
\newcommand{\diaa}{$\diamondsuit\!$}
\newcommand{\diaas}{$\diamondsuit$\hskip -0.3 pt s }
\newcommand{\whboxx}{$\square$\hskip -0.2 pt}
\begin{document}
%\date{(date1), and in revised form (date2).}
\subjclass[2000]{primary 52C20; secondary 05B45, 51M20}
\keywords{colour, fabric, isonemal, striping, weaving}
\thanks{Work on this material was done at home and at Wolfson College, Oxford (2005--2006). I am grateful for Oxford hospitality and Winnipeg patience.}

\title[Multicoloured Isonemal Fabrics by Thin Striping]{Colouring Isonemal Fabrics with more than two Colours by Thin Striping}

\author{Robert S.~D.~Thomas}

\address{St John's College and Department of Mathematics, University of Manitoba, Winnipeg, Manitoba  R3T 2N2  Canada}

\email{thomas@cc.umanitoba.ca}

\begin{abstract}
Perfect colouring of isonemal fabrics by thin striping of warp and weft with more than two colours is examined. Examples of thin striping in all possible species with no redundancy and with redundant cells arranged as twills are given. Colouring woven flat tori is discussed.
\end{abstract}

\maketitle

\section{Background}%1
\label{sect:Backg}

\noindent This paper takes up the subject of colouring fabrics perfectly with more than two colours introduced in \cite{Thomas2013} and extends it to thin striping, that is, to colouring the strands in both directions with a finite number of colours cyclically. The first three sections of \cite{Thomas2013} and the standard references \cite{GS1980,GS1988} are relevant and will not be repeated here.

At a referee's request three paragraphs will give some idea of terms and notations to be used. A {\em fabric} is a two-fold weave that does not fall apart having {\em warps} (vertical and dark in colour) crossing {\em wefts} (horizontal and pale) in square {\em cells}, in each of which one {\em strand} (warp or weft) is uppermost or visible in an illustration of the {\em obverse} side of the weave. The other side, the {\em reverse}, is illustrated (if at all) as reflected in a mirror set up behind/below the plane of the fabric to preserve symmetries. {\em Isonemal} fabrics have symmetry group $G_1$ transitive on the strands. The pattern of over and under is periodic along a strand, this period being called {\em order}, distinct from the two-dimensional {\em period}, the area of the smallest period parallelograms. A group's period parallelogram with corners that are translates of one another is called a {\em lattice unit}. Two catalogues of fabrics have been published \cite{GS1985,GS1986} giving catalogue numbers for isonemal fabrics of small order in the form $a$-$b$-$c$, where $a$ is order, $b$ is a unique representation of the over-and-under pattern, and $c$ is a serial number.

Most isonemal fabrics were divided into five {\em genera} by Gr\"unbaum and Shephard \cite{GS1985} and 39 {\em species} in \cite{Thomas2009,Thomas2010a,Thomas2010b}, based on Roth's \cite{Roth1993} description of 39 {\em types} of symmetry-group pairs $\langle G_1, H_1\rangle$, where $H_1$ is the subgroup of $G_1$ consisting of elements not including reflection in the plane of the fabric, represented $\tau$. (The few fabrics outside a genus or species are called {\em exceptional}.) $G_1$ consists of elements $\langle s,t\rangle$, where $s$ is a two-dimensional transformation (translation, reflection, glide-reflection, half-turn (\!\diaa), or quarter-turn (\whboxx)) and $t$ is either the identity $e$ or the reflection $\tau$. $H_1$ is the subgroup consisting of elements $\langle s,e\rangle$, and $G_2$ is the group that is the projection of $G_1$ defined by $\langle s,\tau\rangle\mapsto\langle s,e\rangle$ and $\langle s,e\rangle\mapsto\langle s,e\rangle$ to avoid the issue of reflections in the plane of the fabric. Axes of reflection pass diagonally across cells (Fig.~\ref{fig:8dea}); axes of glide-reflection are not so restricted (Fig.~\ref{fig:8dea}, Fig.~\ref{fig:9ab}a). When they do so (Fig.~\ref{fig:10ab}a), they are said to be in {\em mirror position}. Figure~\ref{fig:8dea}ab illustrates the simplest isonemal fabrics, {\em twills}, in which each strand's pattern is the adjacent strand's shifted by one; a twill's over-and-under pattern is represented $a_1/a_2/b_1/b_2/\dots$ until its order, the sum of $a_1 + a_2 + \dots$, is used up.
\begin{figure}%1, 2 before
\[
\begin{array}{ccc}
\noindent\epsffile{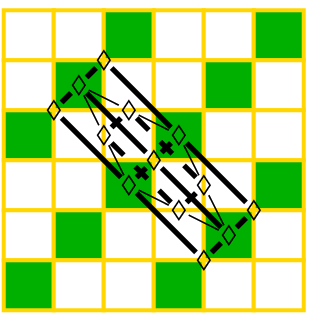} &\epsffile{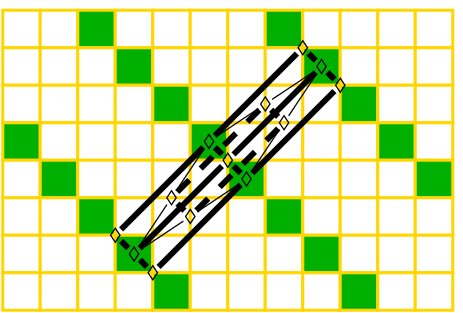} &\epsffile{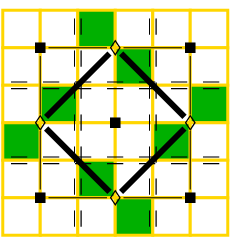}
\\%40, 30%
\mbox{(a)} &\mbox{(b)} &\mbox{(c)}
\end{array}
\]
\caption{Designs of fabrics that could be patterns of redundancy with symmetries marked. a. The 2/1 twill. b. The 4/1 twill. c. 4-1-1.}
\label{fig:8dea}
\end{figure}

The {\em standard} colouring (warp dark, weft pale) is a {\em perfect colouring} in the sense that each symmetry in $G_1$ either preserves or reverses the colour of all cells. A non-standard colouring is called {\em perfect} if each symmetry permutes the colours of all cells of each colour. When the two strands passing through a cell are of the same colour, the cell is called {\em redundant} in that colouring because which strand is uppermost has no effect on the cell's appearance.
\begin{figure}%2, 3 before
\[
\begin{array}{cc}
\noindent
\epsffile{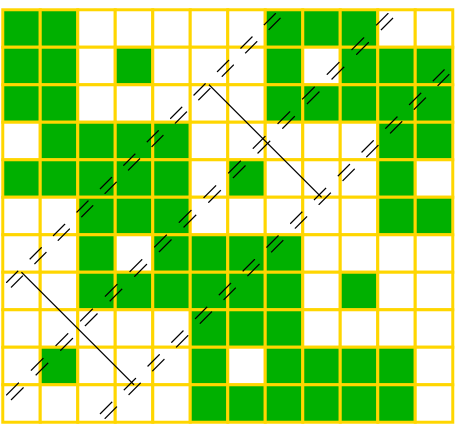}&\epsffile{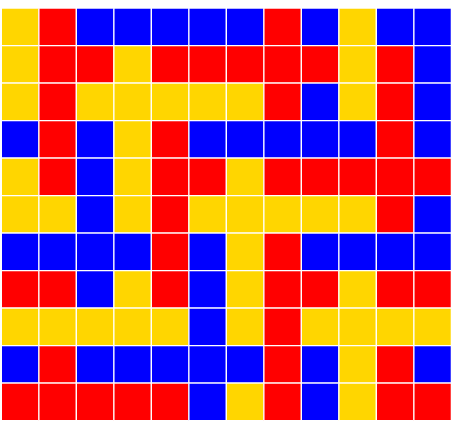}\\%30%[3-col. p. 2 top; ]
\mbox{(a)} &\mbox{(b)}
\end{array}
\]
\caption{a. Order-30 species-$1_o$ example of Figure 10b of \cite{Thomas2009}. b. Three-colouring by thin striping with redundant cells strictly between the cells through which the axes run.}
\label{fig:9ab}
\end{figure}
\begin{figure}%3, 4 before
\[
\begin{array}{cc}
\epsffile{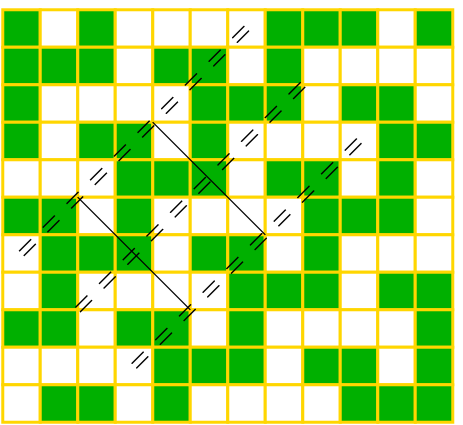} &\epsffile{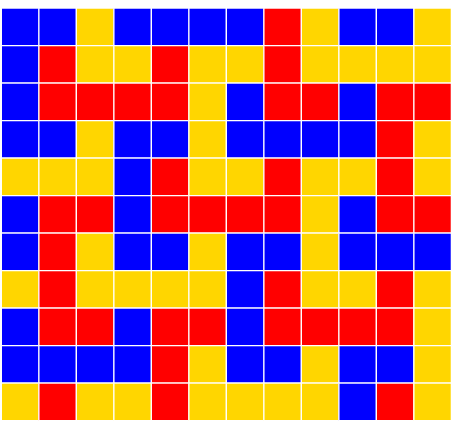}\\%30%[Roth's 12-183-1parallel 4 in 2003 MS p. 2 between 6 and 7---bottom C's ]
\mbox{(a)} &\mbox{(b)}
\end{array}
\]
\caption{a. Design of Roth's \cite{Roth1993} example 12-183-1 of species $1_m$. b. Three-colouring by thin striping with redundant cells along the axes.}
\label{fig:10ab}
\end{figure}

Thin stripes overlap in individual cells---all cells---of the fabric.
Thin stripes of the same colour overlap in redundant cells, in this paper arranged as twills.
The natural unit of measurement in the fabric is the side of a cell, in which the diagonal of the square cells have the length $\sqrt 2$, which will be abbreviated $\delta$. 
It is convenient to represent $\delta/2$ by $\beta$.
In \cite{Thomas2013} the following results about thin striping were shown.

\medskip
\begin{Thm} %1
Some non-exceptional fabrics in all species can be perfectly colour\-ed by the assignment of different colours to all strands.
\label{thm:1}
\end{Thm}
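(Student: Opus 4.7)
The plan is to argue that the stated colouring works for \emph{any} isonemal fabric in which warp and weft strands are each given its own distinct colour, and then to invoke existing catalogues to guarantee a non-exceptional example in every species.  So the real content is to check that the assignment ``a different colour to every strand'' is already a perfect colouring whenever the fabric is isonemal.

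First I would set up the cell-colouring precisely.  Label the warps $w_1,w_2,\dots$ with distinct colours $W_1,W_2,\dots$, and the wefts $f_1,f_2,\dots$ with further distinct colours $F_1,F_2,\dots$.  In thin striping, both stripes pass through every cell, so the visible colour at a cell $(w_i,f_j)$ is $W_i$ if the warp is uppermost there and $F_j$ if the weft is.  Thus the colour of a cell is a function of the pair (visible strand, identity of that strand).  Next I would note that, because $G_1$ is the full fabric symmetry group, every $\sigma=\langle s,t\rangle\in G_1$ carries strands to strands: the planar part $s$ permutes the set of all strands (possibly interchanging warps with wefts when $s$ involves a quarter-turn or a diagonal reflection), and $t\in\{e,\tau\}$ records whether the over/under relation is additionally reversed.

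The key step is then the observation that $\sigma$ preserves the ``visible strand'' map: if warp $w_i$ is the uppermost strand of cell $C$, then the uppermost strand of $\sigma(C)$ is precisely $s(w_i)$ (a warp when $t=e$, a weft when $s$ swaps axis directions, and so on).  Consequently the map $W_i\mapsto\text{colour of }s(w_i)$, $F_j\mapsto\text{colour of }s(f_j)$ is well defined on colours and independent of which particular cell on the strand we test it at.  Thus every $\sigma\in G_1$ permutes colours; that is exactly the condition for the colouring to be perfect.  Because the strands of a given warp/weft have no colour in common with any other strand, there are no redundant cells.

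Finally I would address the ``in all species'' clause by citing the species classification given in the references \cite{Roth1993,Thomas2009,Thomas2010a,Thomas2010b} together with the catalogues \cite{GS1985,GS1986}: each of the 39 species is populated by non-exceptional isonemal fabrics, and applying the construction above to any such representative gives the required perfect colouring.  The main obstacle I anticipate is the bookkeeping for the mixed elements $\langle s,\tau\rangle$, where one must verify carefully that the combination of a warp/weft swap by $s$ and the over/under flip by $\tau$ still sends ``strand on top'' to ``image strand on top'' consistently; but this is forced by the very definition of $\sigma$ being a fabric symmetry, so it is a matter of unwinding definitions rather than of genuine combinatorial difficulty.
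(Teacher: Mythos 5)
You should first note that this paper never proves Theorem~\ref{thm:1} at all: it is quoted as a result already established in the thick-striping paper \cite{Thomas2013}, and the nearest this paper comes to an argument is the remark in Section~\ref{sect:noRed} that for thin striping ``any striping in both directions with no redundancy will do.'' Your overall strategy --- show the all-distinct-colours assignment is automatically perfect for any isonemal fabric, then invoke the non-emptiness of the species --- is exactly in that spirit. The trouble is that your key step is false as stated. For a side-reversing element $\langle s,\tau\rangle$ of $G_1$ the over/under relation is reversed: if warp $w_i$ passes over weft $f_j$ at cell $C$, then at the image cell the strand $s(w_i)$ passes \emph{under} $s(f_j)$, so the strand visible on the obverse there is $s(f_j)$, not $s(w_i)$. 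Your proposed colour map on visible cell colours is then not well defined: the cells showing colour $W_i$ all lie on $w_i$ but cross many different wefts, and their images show the colours of the images of those various wefts, which are all distinct in your colouring. Your closing claim that consistency ``is forced by the very definition of $\sigma$ being a fabric symmetry'' is exactly backwards --- the definition forces the visible strand to map to the hidden one whenever $t=\tau$, which is the case you cannot wave away.

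The repair is short and is what the paper's framework intends. Formulate perfection at the level of the strands (equivalently, for a side-reversing symmetry compare the obverse of a cell with the reverse of its image, which is how the paper treats the two sides of a fabric). A thin stripe is a single strand, and every element of $G_1$ permutes the strands, possibly exchanging warps with wefts; since every strand carries its own colour, the induced map on colours is a bijection automatically, with no reference to which strand is uppermost, and since no two strands share a colour there are no redundant cells, so the consistency condition of Roth's Colouring Theorem \cite{Roth1995} is vacuous. With that correction your argument does establish the conclusion for every isonemal fabric, and the ``in all species'' clause follows, as you say, from the fact that each of the 39 species contains non-exceptional fabrics \cite{Roth1993,Thomas2009,Thomas2010a,Thomas2010b}; that is if anything slightly stronger than the existential statement of the theorem, but you should be aware that the paper itself gets the result by citation rather than by this (or any) argument.
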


\begin{Thm} %2
If no colour is common to warps and wefts in the assignment of a finite number of colours to the striping of an isonemal fabric, then the necessary conditions:
the same number of colours are assigned to warps as to wefts, and
the vertical and horizontal sequences of the colours of the stripes are periodic with the same period,
are sufficient constraints on the colours to allow perfect colouring by thin striping.
\label{thm:2}
\end{Thm}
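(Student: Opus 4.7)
The plan is to construct a perfect colouring explicitly by reducing the infinite fabric to a finite combinatorial problem using the common period $p$. I would first index warps by their horizontal position $i \in \mathbf{Z}$ and wefts by their vertical position $j \in \mathbf{Z}$, and observe that the projection $G_2$ of $G_1$ to the plane acts by integer affine maps on these indices: translations shift, reflections and half-turns negate, and diagonal reflections or quarter-turns moreover exchange the warp and weft index sets. Since these maps have integer coefficients, I would pass to residues modulo $p$ to obtain a well-defined action of a finite quotient group $\bar G$ on the set $\Omega$ consisting of two disjoint copies of $\{0,1,\ldots,p-1\}$---one labelled for warps, the other for wefts.

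Next, I would assign colours by the rule that warp $i$ receives the colour associated with $i \bmod p$ and weft $j$ receives the colour associated with $j \bmod p$. Finding a perfect thin striping then reduces to exhibiting a $\bar G$-invariant partition of $\Omega$ into $2n$ parts, with each copy of $\{0,1,\ldots,p-1\}$ contributing exactly $n$ parts (to keep the warp and weft palettes disjoint as required by the hypothesis). The extreme case $n = p$ works trivially (each residue gets its own colour, and $\bar G$ then permutes these $2p$ labels). For $n < p$, I would construct the partition by taking suitable unions of $\bar G$-orbits, or more generally of cosets of $\bar G$-invariant subgroups of the shift action, using isonemality to match the warp-side orbit structure with the weft-side one via the swap elements of $\bar G$.

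To verify perfection, I would check that every $g \in G_1$ acts on $\Omega$ by an affine permutation that respects the chosen $\bar G$-invariant partition and preserves the warp/weft palette split (warp residues and weft residues are interchanged as whole blocks under swap elements, never mixed within a single $\bar G$-orbit). The main obstacle will be showing that, for arbitrary $n$ and $p$ compatible with the necessary conditions, a $\bar G$-invariant partition into exactly $2n$ parts with the required warp/weft split always exists; this may need case-by-case treatment based on the species of the fabric and on the divisor structure of $p$, with the isonemality hypothesis furnishing the crucial symmetry that matches the warp-side combinatorics to the weft-side combinatorics under the swap.
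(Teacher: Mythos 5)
First, note that the paper does not actually prove Theorem~\ref{thm:2} here: it is quoted as a result already established in \cite{Thomas2013}, and the only reasoning about it in this paper is the remark in Section~\ref{sect:noRed} that with thin stripes nothing can go wrong, because a thin stripe is a single strand and every symmetry of an isonemal fabric maps strands to strands. Your set-up --- the affine action of the symmetries on strand indices, reduction modulo the common period $p$, and the observation that disjoint warp and weft palettes of equal size let warp--weft-exchanging symmetries act as bijections between the palettes --- is exactly a formalization of that remark, and your ``extreme case $n=p$'' is in fact the whole theorem: thin striping is by definition the cyclic assignment of the colours to successive strands, so each colour occupies one residue class modulo the period, and the induced affine permutations of residues are automatically consistent permutations of the colours. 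Had you stopped there, your argument would be a complete proof along essentially the intended lines.

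The difficulty is that you do not stop there. You treat the case $n<p$ (a colour repeated within a period) as part of what must be proved, reduce it to the existence of a $\bar G$-invariant partition of the two copies of $\{0,1,\dots,p-1\}$ into $2n$ parts, and then concede that establishing this existence ``may need case-by-case treatment based on the species.'' As written that is an acknowledged gap: no construction or existence argument is given, and something would genuinely have to be proved, since an arbitrary periodic sequence with repeats can fail to be perfect --- for example, with period $4$ and warp sequence $A,B,C,A$, a symmetry negating the warp index sends the colour class $\{0,3\}$ of $A$ onto $\{0,1\}$, which is not a colour class, so no consistent permutation of colours is induced. Either restrict yourself to the cyclic colouring that the paper means by thin striping, in which case your unresolved ``main obstacle'' evaporates and the proof is short, or, if you intend the more general statement about arbitrary periodic sequences, you must actually exhibit the invariant partition (or the admissible sequence) rather than defer it.
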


\section{No Redundancy}%
\label{sect:noRed}

\noindent Sections 5 and 6 of \cite{Thomas2013} were written on the erroneous assumption, conflicting with earlier sections of that paper, that there will always be redundant cells in a colouring.%
\footnote{This apparently absurd event is an inexcusable result of sections 5 and 6 having been written when two-colourings were generalized to more than two colours with redundancy before the further generalization of earlier sections of the paper had been thought of.}
That is of course only the case when there are perpendicular strands of the same colour. This section, which should have been in \cite{Thomas2013}, takes up the interesting possibility where there are no such strands and so no redundancy, there being stripes of two colours in each direction, a total of four colours --- not too many to make an attractive pattern.
\begin{figure}%4
\[\begin{array}{cc}
\epsffile{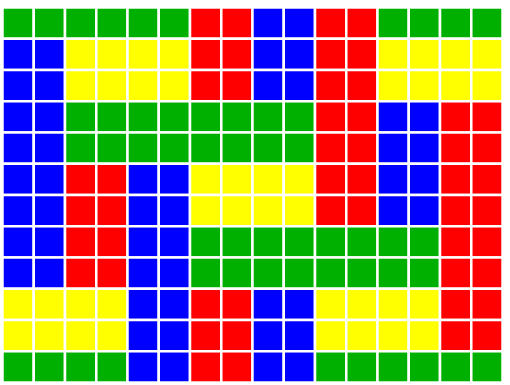} &\epsffile{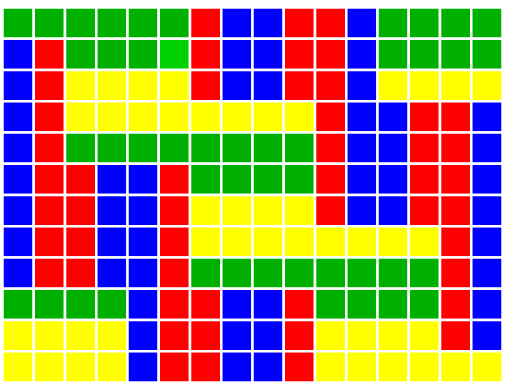}\\%25%
\mbox{(a)} &\mbox{(b)}
\end{array}\]
\caption{Fabric of species $11_e$ in Fig.~\ref{fig:40a}b thickly 4-coloured two ways with no redundant cells.}\label{fig:nr12}.
\end{figure}
\begin{figure}%5
\[\begin{array}{cc}
\epsffile{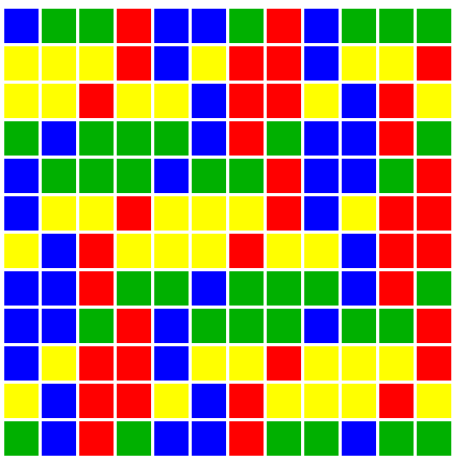} &\epsffile{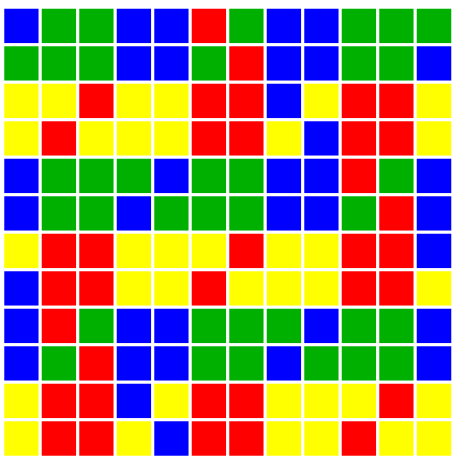}\\%30%
\mbox{(a)} &\mbox{(b)}
\end{array}\]
\caption{Fabric 12-315-4 of species 30 in Fig.~\ref{fig:13ab}a thickly 4-coloured two ways with no redundant cells.}\label{fig:nr34}.
\end{figure}
\begin{figure}%6
\[\begin{array}{ccc}
\epsffile{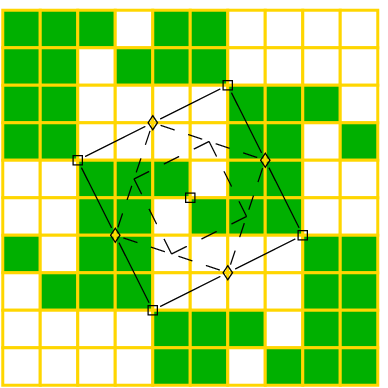} &\epsffile{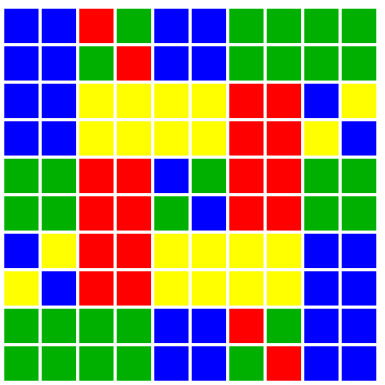} &\epsffile{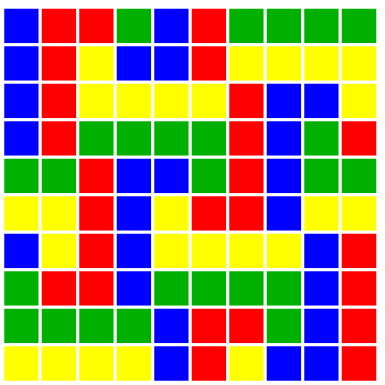}\\%30%
\mbox{(a)} &\mbox{(b)} &\mbox{(c)}
\end{array}\]
\caption{a. Fabric 10-55-2 of species $33_3$ (Fig.~5a of \cite{Thomas2013}). b, c. Thickly 4-coloured two ways with no redundant cells.}\label{fig:nr56}.
\end{figure}

Lemma 3.1 of \cite{Thomas2013} gives the necessary and sufficient condition for an assignment of all different colours to a thick striping of a fabric to be perfect, namely preservation of the stripes by the symmetries, followed by the conditions on the symmetries to allow this. No mention is made of conditions on the striping, but striping almost always has to be done with some care, as was noted by Roth \cite{Roth1995}. In particular, the striping of wefts dictates how the warps can be striped, as is illustrated with four colours in Fig.~\ref{fig:nr12}a and b. These diagrams show two ways in which the fabric of Fig.~\ref{fig:40a}b can be thickly striped perfectly with two colours in each direction. In each case, the striping in one direction dictates the boundaries of the stripes in the other direction. What one cannot do is to stripe the fabric with its horizontal stripes as in Fig.~\ref{fig:nr12}a and its vertical stripes as in Fig.~\ref{fig:nr12}b. Not that this cannot be woven, but warp-weft-interchanging symmetries cannot take stripes to stripes. The same condition of Lemma 3.1 suffices for the same finite number of colours in each direction repeating cyclically with no redundancy. While this condition is a slight relaxation of the conditions for striping with two colours --- and therefore unavoidably with redundancy, it still bans twills and twillins as in Section 3 of \cite{Thomas2012}. That eliminates species 1--10, 12, 14, 16, 18, 20, 23, 24, 26, 28, 31, 32, 34, 36, and 39 leaving 11, 13, 15, 17, 19, 21, 22, 25, 27${}_e$, 29, 30, 33, 35, 37, and 38. Any fabric of these species should be perfectly thickly colorable with any number of colours greater than 1 in each direction. Only two colours will be explored briefly because they are likely to be more attractive than more colours. Examples are in Figures~\ref{fig:nr34} and \ref{fig:nr56} as well as \ref{fig:nr12}. These are all from species that cannot be thickly 4-coloured with twilly redundancy. That they cannot be thickly 4-coloured with twilly redundancy led to the erroneous remark in \cite{Thomas2013} (which assumed, as explained in footnote 1, that redundancy was inevitable) that fabrics of species 38 (p.~50) and 33--39 (p.~54) cannot be thickly coloured with four colours at all. Theorems~\ref{thm:3} and \ref{thm:4} clarify this for thin striping.
\begin{figure}%7
\[\begin{array}{cc}
\epsffile{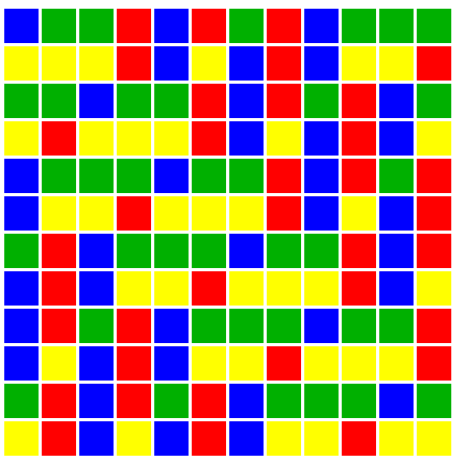} &\epsffile{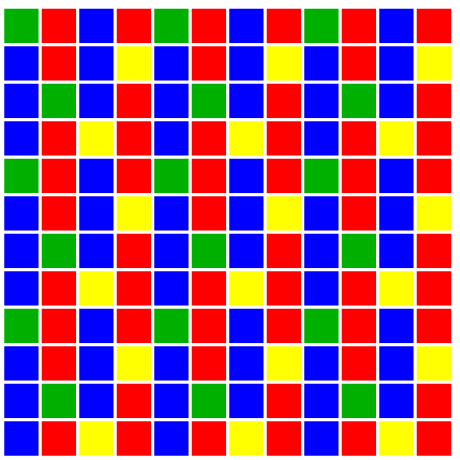}\\%25%
\mbox{(a)} &\mbox{(b)}
\end{array}\]
\caption{Thin 4-colourings with no redundant cells. a. Fabric of species 30 thickly coloured in Fig.~\ref{fig:nr34}. b. 4-1-1 reverse.}\label{fig:nr8}.
\end{figure}
\begin{figure}%8
\[\begin{array}{cc}
\epsffile{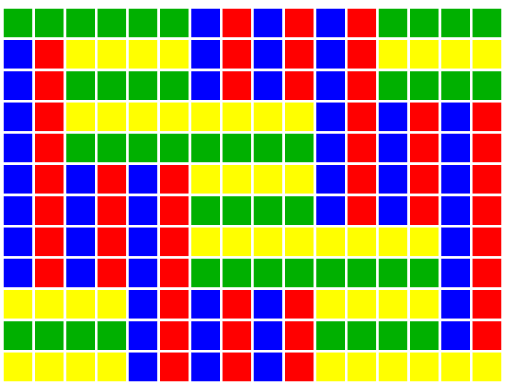} &\epsffile{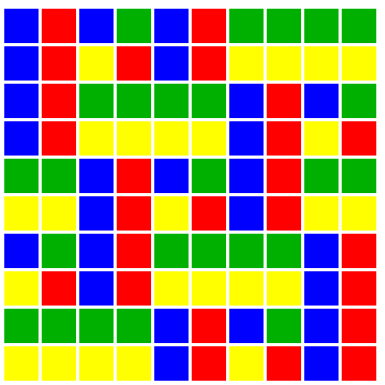}\\%30%
\mbox{(a)} &\mbox{(b)}
\end{array}\]
\caption{Fabrics of species $11_e$ and $33_3$, thickly coloured in Figg.~\ref{fig:nr12} and \ref{fig:nr56}, thinly 4-coloured with no redundant cells.}\label{fig:nr79}.
\end{figure}

Turning to thin striping, one sees that none of the thick-striping restrictions  on species is relevant. And any striping in both directions with no redundancy will do; what can go wrong with thick stripes mentioned in the previous paragraph cannot occur with thin stripes. Unfortunately, since what can be coloured that way is just longer or shorter strips, no interesting motif of a single colour can emerge, and two-coloured motifs seem not to be of much interest.%
\footnote{Not of sufficient interest to be noticed? This may be a matter of psychology or eyesight. When I look at Figure~\ref{fig:48a}b I recognize immediately a red and blue braid motif but must search for the green and yellow one that perfect colouring ensures. A possible exception to this claim is the smallest possible two-colour motif, just two cells, illustrated in Figure~\ref{fig:nr8}b.}
As examples I give a thinly striped version of the three fabrics used to illustrate thick striping (there not being two essentially different thin stripings) in Figures~\ref{fig:nr8} and \ref{fig:nr79}.

\section{Preliminaries}%
\label{sect:prelim}

\noindent The remainder of this paper determines the constraints on the fabric designs to allow perfect colourings by thin striping with twilly redundancy and how to arrange them. In \cite{Thomas2013} the following result is Lemma 4.4.

\begin{Lem} %1
The assignment of a finite number of colours to stripes of an isonemal fabric can be a perfect colouring only if the colours of the warps are the colours of the wefts or no colour is shared.
\label{lem:1}
\end{Lem}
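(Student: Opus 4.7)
The plan is to show that $C_W \cap C_V$, the set of colours shared between warps and wefts, is invariant under the action of $G_1$ on the palette, and that the isonemal transitivity of $G_1$ on strands then forces every warp colour (and, symmetrically, every weft colour) into that intersection as soon as it is non-empty. Here I write $C_W$ for the set of colours appearing on warps, $C_V$ for the set appearing on wefts, and $\pi_\sigma$ for the colour permutation induced by a symmetry $\sigma \in G_1$ in a perfect colouring.

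For the first step I examine, for each $\sigma = \langle s, t\rangle \in G_1$, how $\pi_\sigma$ acts on $C_W$ and $C_V$. A cell $C$ at which a warp $W$ is uppermost carries colour $c(W)$, and under $\sigma$ it is sent to the planar point $s(C)$; which strand is uppermost at $s(C)$ is controlled by whether $s$ interchanges horizontal and vertical directions and, independently, by whether $t = \tau$ reverses uppermost-ness. Running through the four combinations of these two binary choices, one finds that in each case $\pi_\sigma$ either sends $C_W$ to $C_W$ and $C_V$ to $C_V$, or swaps the two: the preserving cases are $s$ direction-preserving with $t = e$ and $s$ direction-swapping with $t = \tau$, while the other two combinations swap. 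In every case $\pi_\sigma$ preserves $C_W \cap C_V$ setwise.

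Now suppose some colour $c$ lies in $C_W \cap C_V$, witnessed by a warp $W$ and a weft $V$ both of colour $c$, and let $W'$ be any warp. By the isonemal transitivity of $G_1$ on strands there is $\sigma \in G_1$ with $\sigma(W) = W'$; since $W$ and $W'$ are both warps, the spatial part $s$ is necessarily direction-preserving. Then $c(W') = \pi_\sigma(c)$, which by the invariance just established lies in $C_W \cap C_V$; in particular $c(W') \in C_V$. Hence $C_W \subseteq C_V$, and the parallel argument starting from an arbitrary weft gives $C_V \subseteq C_W$, so $C_W = C_V$, which is the contrapositive of the stated lemma.

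The main obstacle will be the bookkeeping in the case analysis of the first step: it is not $s$ alone or $t$ alone, but their combined effect on whether an uppermost warp maps to an uppermost warp or to an uppermost weft, that decides whether $\pi_\sigma$ preserves or interchanges the pair $C_W, C_V$, and the interaction of $\tau$ with the direction of the image strand must be tracked carefully in each subcase. Once the invariance of $C_W \cap C_V$ is in hand, the remaining orbit argument is short.
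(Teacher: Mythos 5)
The paper itself does not prove this lemma; it is quoted as Lemma 4.4 of \cite{Thomas2013}, so your attempt has to be judged on its own terms. Your first step --- the four-way case analysis showing that the cell-colour permutation $\pi_\sigma$ either preserves or interchanges the sets $C_W$ and $C_V$ --- is correct. The gap is in the orbit step, in the equation $c(W')=\pi_\sigma(c)$. That equation identifies the action of $\sigma$ on \emph{strand} colours with the permutation $\pi_\sigma$ of \emph{cell} colours, and this identification is valid only when $t=e$. If $\sigma=\langle s,\tau\rangle$ with $s$ direction-preserving, then the cells in which $W$ is uppermost are carried to cells in which $\sigma(W)=W'$ lies underneath, so perfection gives $c(V')=\pi_\sigma(c(W))$ for the wefts $V'$ uppermost at those image cells, and dually $c(W')=\pi_\sigma(c(V))$ for any weft $V$ that passes over $W$; nothing ties $c(W')$ to $\pi_\sigma(c(W))$. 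The standard colouring shows the equation genuinely fails: it is perfect, every warp is dark and every weft pale, and a side-reversing symmetry whose planar part preserves directions sends dark warps to dark warps while $\pi_\sigma$ exchanges dark and pale, so $c(\sigma(W))\neq\pi_\sigma(c(W))$.

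This cannot be patched by simply choosing $\sigma$ better: you correctly force $s$ to be direction-preserving, but you have no control over $t$, because isonemality is transitivity of $G_1$ on strands, not of the side-preserving subgroup $H_1$; in many species the elements connecting two given warps are side-reversing, so the $t=\tau$ case is unavoidable. As written, your argument establishes the lemma only for fabrics whose $H_1$ is transitive on warps and on wefts, which the hypothesis does not supply. A repair would require either showing that each symmetry induces a well-defined permutation of strand colours with the same preserve-or-swap behaviour on $C_W$ and $C_V$ --- which for side-reversing elements is precisely the nontrivial point, since the relations above only link $c(\sigma(W))$ to the colours of the wefts crossing $W$ --- or a separate argument for side-reversing and direction-swapping elements that exploits the striping structure (each strand monochromatic, colours periodic along the strand ordering) rather than the cell-level permutation alone.
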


The order of the colours does not need to be the same vertically as horizontally. 
For example, Figure~\ref{fig:69ab} shows a five-colouring otherwise of Roth's \cite{Roth1993} example of species 34, catalogue number 10-107-1.
This topic will not be pursued here.
\begin{figure}%9, 1 before
\[
\begin{array}{cc}
\epsffile{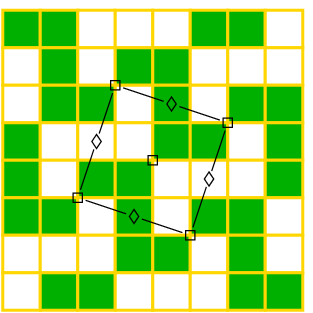} &\epsffile{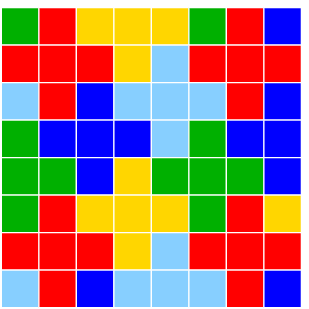}\\%30% middle p. 1

\mbox{(a)} &\mbox{(b)}
\end{array}
\]
\caption{a. Roth's \cite{Roth1993} example of species $34$, 10-107-1 with lattice unit and centres of $G_1$ marked (Fig.~8a of \cite{Thomas2010b}). b. Five-colouring by thin striping.}
\label{fig:69ab}
\end{figure}
The centres of the cross motifs are redundant cells at which strands of the same colour cross.
In order for the colouring to be a perfect colouring, Roth's Colouring Theorem \cite{Roth1995} shows that the symmetry group $G_1$ of the fabric must leave the redundant cells/blocks invariant as a whole and each element of the group must permute them, if at all, in a consistent way. 
The group $G_2$ of the fabric must be a subgroup of the group $G_2$ of the pattern of redundant cells, as in Figures~\ref{fig:8dea}a and b.
For a specific colouring of the wefts, say, as illustrated in Figure~\ref{fig:69ab}b, the positions of the redundant cells must be chosen in accordance with the consistency requirement.
This choice determines the colouring of the warps.

From now on, warps and wefts will be coloured with the same finite set of $c$ colours. For thin striping, the redundant cells may be arranged as $(c-1)/1$ twills, and for $c\geq 4$ as various other isonemal fabrics, 6-1-1, 8-1-1, 8-1-2, and including in particular the square satins beginning with 5-1-1.  Fabrics will have order five or more, and redundant cells will be arranged as twills (Figg.~\ref{fig:8dea}a and b).

There is reason to consider mostly isomenal fabrics with order more than four because those of order four and less are few and mostly exceptional, but in order to consider colourings with three colours, one needs --- there being no choice --- to begin with the 2/1 twill (Figure~\ref{fig:8dea}a) as a redundancy pattern.
Mainly because of their aesthetic appeal, but also for practical reasons, I intend to use three colours to represent what can be done with a twilly pattern of redundant cells for any odd number of colours, four colours for a twilly redundancy pattern for any even number of colours congruent to 0 mod 4, and six colours for a twilly redundancy pattern for even numbers of colours congruent to 2 mod 4.
Also, because side-reversing symmetries (mirrors and side-reversing glide-reflections, half-turns, and quarter-turns) of a fabric relate its opposite sides and one sees fabrics one side at a time, side-reversing symmetries have less appeal in striped fabrics.
Emphasis will be on fabrics with side-preserving symmetries just because their symmetries are visible when the fabric is striped.%
\footnote{While the presence of side-reversing symmetry makes it uninteresting and therefore unnecessary to look at both sides of such coloured fabrics, the absence of all such symmetry makes the two sides of a fabric that lacks it patterns of independent interest.}

\begin{Thm}%3-STRAND
For three colours, perfect colouring of non-exceptional fabrics can be done only with a twilly arrangement of redundant cells.
\label{thm:3}
\end{Thm}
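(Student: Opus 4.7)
The plan is to show that with only three colours available, the geometric and combinatorial options for a $G_1$-invariant redundancy pattern collapse to a single equivalence class: the $2/1$ twill.

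First, by Lemma~\ref{lem:1} and the presence of redundant cells, the three colours must be shared between warps and wefts. Since thin striping is cyclic with three colours, Theorem~\ref{thm:2}'s period condition forces a common period $p$ for the warp and weft sequences. The minimal possibility $p=3$ assigns each colour to exactly one warp and one weft in every consecutive trio, and higher-period colourings reduce, up to the $G_1$-action on colours, to the $p=3$ case, so it suffices to treat $p=3$.

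Second, I would examine the structure within any fundamental $3\times 3$ block. Here the redundant cells comprise exactly three cells, one per colour, hitting each row and column exactly once, and so form a permutation matrix on three symbols. Of the six possibilities, the identity and the two three-cycles tile the plane as the three translates of the $2/1$ twill pattern of Figure~\ref{fig:8dea}a, while the three transpositions produce distinct patterns that are not twills.

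Third, I would invoke Roth's Colouring Theorem to require invariance of the redundancy set under $G_1$ (up to a consistent permutation of the colour labels) and the containment of $G_2$ in the $G_2$ of the redundancy pattern. For a non-exceptional fabric, $G_2$ contains at least one non-translational element and acts strand-transitively, so the induced action on the colour alphabet must treat the three colours on equal footing. The identity and three-cycle patterns support such an action; a transposition pattern, by contrast, singles out one colour (the fixed point) while swapping the other two, breaking the symmetry required between colour classes. Ruling out the three transposition patterns for every non-exceptional species leaves only the twilly arrangement.

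The main obstacle will be the last step: one needs, either species-by-species from the classification in \cite{Thomas2009,Thomas2010a,Thomas2010b} or by a uniform argument, that each non-exceptional $G_1$ actually contains a symmetry whose action on the three colour classes is a $3$-cycle rather than a transposition or identity. Given the strand-transitivity built into the definition of isonemal and the fact that non-exceptional species by definition possess rich beyond-translational symmetry, this seems very plausible, but it is the point where the argument is not purely formal and requires genuine use of the species catalogue.
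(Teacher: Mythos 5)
Your proposal has two genuine problems. First, you assume ``the presence of redundant cells'' at the outset, but that is part of what the theorem asserts: unlike the four-colour case (Theorem~\ref{thm:4}), a three-colouring with \emph{no} redundancy must be ruled out. The paper's proof does this with a one-line parity argument: by Lemma~\ref{lem:1} either every colour is shared between warps and wefts or none is, and the no-sharing alternative requires (Theorem~\ref{thm:2}) the same number of colours on warps as on wefts, which is impossible when the total is the odd number three. Your argument never establishes this, so the ``only with redundant cells'' half of the statement is unproved.

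Second, your combinatorial classification in the $3\times 3$ block is incorrect. Extended periodically with period three, \emph{every} permutation matrix on three symbols is a twill: the identity and the two $3$-cycles give diagonal lines of one slope, and the three transpositions give diagonal lines of the opposite slope (check the transposition fixing $1$: the dark cell moves by a constant shift of $2 \equiv -1$ from each row to the next, so it is the mirror-image $2/1$ twill, not a non-twill pattern). Consequently your third step, which tries to eliminate the transposition patterns by demanding that some symmetry induce a $3$-cycle on the colour classes, is aimed at a non-existent target, and it is also unsound on its own terms: a perfect colouring only requires each symmetry to permute the colour classes consistently, not that the induced permutations be transitive or avoid transpositions, so no species-by-species appeal to the catalogue can deliver what you ask of it. The paper's own argument is much shorter and different in kind: once sharing of colours is forced, the redundancy pattern is an isonemal design of order three with a single dark cell per order length, and the only such design is the $2/1$ twill of Figure~\ref{fig:8dea}a.
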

\begin{proof} The same colours must be used for warp and weft because three is odd. The only isonemal design to serve as a redundancy pattern with order three and a single dark cell per order length is the 2/1 twill.
\end{proof}
Here three strands are not standing proxy, as they will later, for larger odd numbers. Similarly, four strands in the next theorem are not standing proxy for more. These two numbers of colours, like two, are exceptional.
\begin{Thm}%4-STRAND
For four colours, perfect colouring of non-exceptional fabrics can be done either with no redundancy or with a twilly arrangement of redundant cells.
\label{thm:4}
\end{Thm}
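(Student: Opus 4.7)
The plan is to mimic the case-analysis of the proof of Theorem~\ref{thm:3}, but with an extra sub-case forced by the fact that four is even.

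By Lemma~\ref{lem:1}, a perfect four-colouring of the strands is one in which either no colour is shared between warp and weft or all four colours are shared. In the no-sharing case, each direction carries two colours drawn from disjoint palettes, so no warp and weft ever agree in colour and no cell can be redundant; this is the ``no redundancy'' alternative of the theorem, with concrete colourings already displayed in Section~\ref{sect:noRed}. In the all-sharing case all four colours appear in both directions and, by the constraints on perfect thin striping recalled from \cite{Thomas2013} (the analogue of Theorem~\ref{thm:2}), the warp and weft are striped cyclically with a common period that is a multiple of four. Redundant cells sit at the crossings of equal-coloured strands, so they occupy density $1/4$ and form an isonemal design whose symmetry group contains $G_2$ of the underlying fabric, by Roth's Colouring Theorem. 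As in Theorem~\ref{thm:3}, what remains is to enumerate isonemal redundancy patterns of order four (or more generally of order dividing the colouring period) with one marked cell per four consecutive cells along every strand.

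For that enumeration I would first dispose of the twill candidates: the row-to-row shift producing an isonemal twill must be coprime to four, and the only such shifts ($\pm 1$) both yield a $3/1$ twill as in Figure~\ref{fig:8dea}. Non-twill candidates arising from non-constant shift sequences typically fail to be isonemal at all, which can be checked by verifying directly that no row-translation preserves the purported pattern. The main obstacle is the one genuinely non-twill isonemal order-four design on the catalogue, namely $4$-$1$-$1$ (Figure~\ref{fig:8dea}c). Ruling it out requires comparing the symmetry group $G_2$ of $4$-$1$-$1$ against the species list and verifying that it is too restrictive to contain the group of any non-exceptional fabric of order at least five. Once that is done, the $3/1$ twills are the only surviving redundancy template and the theorem follows.
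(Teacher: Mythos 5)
Your overall skeleton matches the paper's: the no-sharing case gives the ``no redundancy'' alternative, and in the shared-colour case the redundancy pattern must be an isonemal order-four design with one marked cell per four along each strand, leaving the $3/1$ twill and $4$-$1$-$1$ as the only candidates, of which $4$-$1$-$1$ must be excluded. But at exactly that point you stop: you write that ruling out $4$-$1$-$1$ ``requires comparing the symmetry group $G_2$ of $4$-$1$-$1$ against the species list and verifying that it is too restrictive,'' without doing the comparison. That verification is the substantive content of the paper's proof, and without it the theorem is not established. The paper's argument that no isonemal fabric can have its $G_2$ contained in the $G_2$ of $4$-$1$-$1$ rests on three concrete observations: the glide-reflection axes of $4$-$1$-$1$ are horizontal and vertical, whereas those of all other isonemal fabrics run at $\pi/4$ to the horizontal; the mirrors of $4$-$1$-$1$ do have the right direction but are $2\delta$ apart, so any selection of them (as needed for genus I or II) is an even number of $\delta$s apart, and the isonemality constraint of \cite{Thomas2009} forbids combining such a selection with the diagonal period $4\delta$ (or a multiple of it); and the centres of rotation of $4$-$1$-$1$ fall only on alternate strand boundaries, which blocks genera III, IV, and V. Some argument of this kind --- axis directions, mirror spacing against the isonemality constraint, and rotation centres --- must be supplied, not merely promised.

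Two smaller points. Your enumeration of the redundancy candidates is looser than it needs to be: ``non-twill candidates arising from non-constant shift sequences typically fail to be isonemal'' is a hand-wave where the paper simply takes the known fact that the only isonemal order-four designs with the required density are the $3/1$ twill and the exceptional $4$-$1$-$1$ (exceptional both in order and in belonging to no genus). Also note the paper's caveat after the proof: the conclusion is not that $4$-$1$-$1$ can never be a redundancy pattern, only that the sole isonemal fabric it serves is itself, and colouring such a design yields mere stripes; your phrasing (``too restrictive to contain the group of any non-exceptional fabric of order at least five'') should be aligned with that, since the exclusion is of all isonemal fabrics other than $4$-$1$-$1$ itself rather than a statement about order.
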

\begin{proof} Two colours in each direction will colour with no redundancy. With redundancy, there is the 3/1 twill and the apparent exception, 4-1-1 (Figure~\ref{fig:8dea}c), which must be shown to be useless. This design is exceptional both in being of order four and in being a member of no genus.

It is impossible for the group $G_2$ of an isonemal fabric to be a subgroup of the group $G_2$ of 4-1-1 for several reasons.
The glide-reflection axes of 4-1-1 are horizontal and vertical, and those of other isonemal fabrics are all at $\pi/4$ to the horizontal.
The mirrors of 4-1-1 are at the right angle to be potentially useful, but they are $2\delta$ apart. 
Any selection from them (for genus I or II) will be an even number of $\delta$s apart, and the isonemality constraint of \cite{Thomas2009} prevents such a selection from combining with a diagonal period of 4$\delta$ (or multiple of 4$\delta$).
Finally, the centres of rotation, as the diagram indicates, fall only on alternate strand boundaries, preventing an isonemal fabric from falling into genus III, IV, or V.
\end{proof}

This does not say that 4-1-1 cannot be the pattern of redundant cells for a thin striping, only that the only isonemal fabric for which it is useful is itself. 
Unfortunately, when a design that is suitable as a redundancy pattern is itself coloured, the result is just stripes.
Compare Figure~\ref{fig:nr8}b, 4-1-1 4-coloured with no redundancy.
All that changes with a different redundancy pattern is the order of the stripes.
With 4-1-1 the order of the colours of the stripes is different in the two directions, but with a twill as redundancy pattern the order is the same in the two directions.
It is obvious in twilly cases that no fabric with quarter-turn symmetry can have $G_2$ a subgroup of the 2/1 or 3/1 twill---or of any other $(c-1)/1$ twill doubled or not, since these twills fall into species with no quarter-turns in their $G_2$s.
(2/1 is of $28_o$, 3/1 is of $26_e$, 6-3-1 is of $27_o$, and 8-3-1 is of subspecies $25_e$.)
Accordingly, no fabric of species 33--39 can be perfectly coloured thickly or thinly with the same three or four colours vertically and horizontally --- or, with twilly redundancy, any number of colours greater than two.
Call this the quarter-turn ban.
On the other hand, some fabrics from all other species are perfectly 3-colourable by thin striping.

\section{Three colours}%2
\label{sect:3C}

\noindent The $G_1$ of the redundant twill 2/1 for thin striping with three colours, of subspecies $28_o$ and illustrated in Figure~\ref{fig:8dea}a, contains perpendicular glide-reflections with axes not in mirror position and mirrors, allowing the possibility of fitting axes of species 1--32 among them.
As the characteristic feature of these twills is diagonal lines of, say, dark cells, axes of the fabric to be coloured can be specified as through, between, or perpendicular to the `dark lines' of the redundancy twill.
As Figure~\ref{fig:8dea} indicates, there is no shortage of mirrors and axes of glide-reflections perpendicular to the dark lines; they occur at every opportunity, a mirror and two axes not in mirror position through every cell.
No ingenuity is required to fit axes in this direction, and little in the direction of the dark lines for various clumps of species.

For subspecies $1_o$, $2_o$, and $4_o$, successive glide-reflection axes not in mirror position an odd multiple $m\geq 1$ of $3\beta$ apart can be placed along the glide-reflection axes between the dark lines that have that spacing.
An example of this treatment is Figure~\ref{fig:9ab}, where a species-$1_o$ fabric is 3-coloured this way; the redundant cells run half-way between the fabric's axes of side-preserving glide-reflection, which are not in mirror position.
Since the length of the lattice unit in $\delta$ units is 5 and the number of colours 3, the length of the lattice unit for fixed-colour translational symmetry is 15.
Other ways of arranging the colouring of these and other subspecies ($1_e$, $2_e$, and $4_e$, for example) are possible; all that is being indicated here is that there are {\it some} fabrics of each species 1--32 that can be so coloured.

For species $1_m$, $2_m$, 3, $5_o$, and $7_o$, alternate glide-reflection axes in mirror position ($1_m$, $2_m$, 3) or alternate mirrors ($5_o$, $7_o$) an odd multiple $m\geq 1$ of $3\delta$ apart can be placed along the mirrors in the dark lines that have that spacing.
Then the other axes will fall on intervening mirrors. Glide-reflection axes can lie on these mirrors regardless of parity of glide.
An example of this is Figure~\ref{fig:10ab},
% Could use species 3 2003 12-79-1.
where Roth's \cite{Roth1993} example of subspecies $1_m$ is 3-coloured this way.
\begin{figure}%10, 5 before
\[\begin{array}{cc}
\epsffile{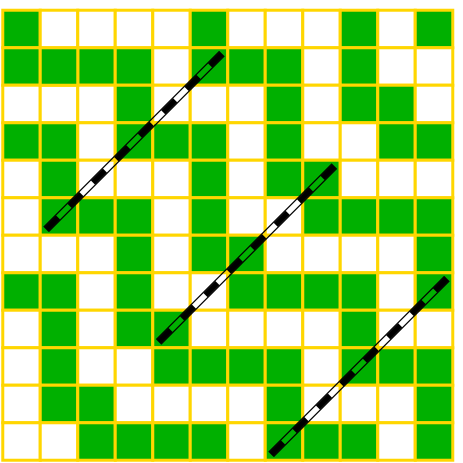} &\epsffile{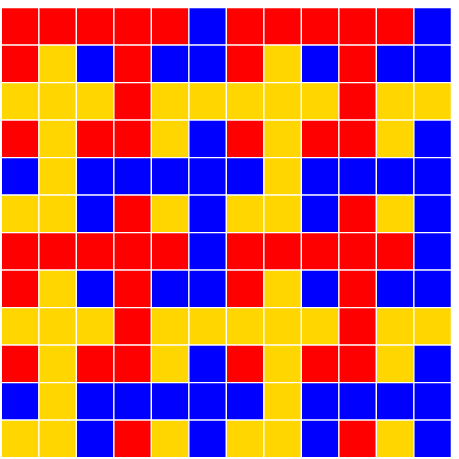}\\%30%[middle p. 22]
\mbox{(a)} &\mbox{(b)}
\end{array}\]
\caption{a. Design of 12-139-1 of species 6. b. Three-colouring by thin striping with redundant cells along the axes.}\label{fig:11ab}
\end{figure}

For species $5_e$ and 6, successive mirrors that can be an odd multiple $m\geq 1$ of $3\delta$ apart can lie along mirrors in the dark lines that have that spacing.
An example is shown thinly striped with 3 colours in Figure~\ref{fig:11ab}.

\begin{figure}%11, 6 before
\[\begin{array}{cc}
\epsffile{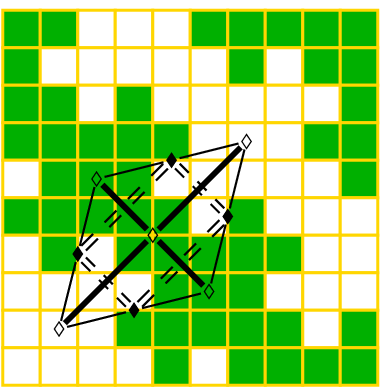} &\epsffile{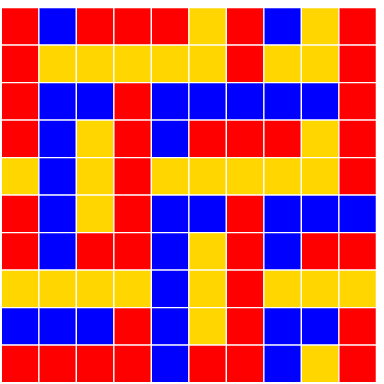}\\%30%[(MS p. 11 middle example of perp 17a)]
\mbox{(a)} &\mbox{(b)}
\end{array}\]
\caption{a. Design of order 30 and species 32 from Figure 17a of \cite{Thomas2010a}. b. Three-colouring by thin striping with redundant cells along the mirrors of positive slope.}\label{fig:12ab}
\end{figure}

For species $8_o$ and 10, $28_o$ and 32, successive mirrors that can be an odd multiple $m\geq 1$ of $3\beta$ apart can be put on dark lines' mirrors.
Intervening glide-reflection axes will lie on intervening axes parallel to the dark lines, and the fabric's perpendicular mirrors and glide-reflection axes, if any ($28_o$ and 32), can be put on the perpendicular mirrors and axes respectively, which are close enough together to make this possible.
An example of species 32 (with the spacing of $28_o$) is shown in Figure~\ref{fig:12ab}.
The disappearance of the mirror symmetry and half of the half-turns makes such a pattern less attractive than one hopes for unless there is extraneous symmetry introduced, and there is hardly room here. 

\begin{figure}%12, 7 before
\[\begin{array}{cc}
\epsffile{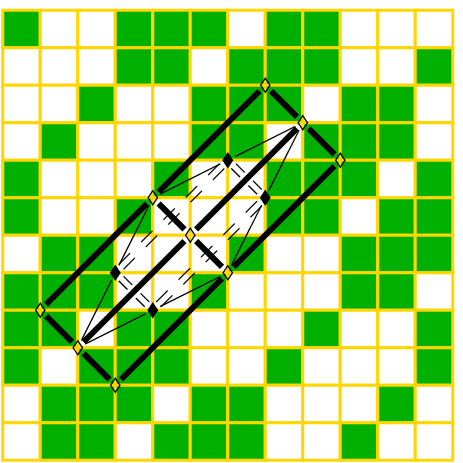} &\epsffile{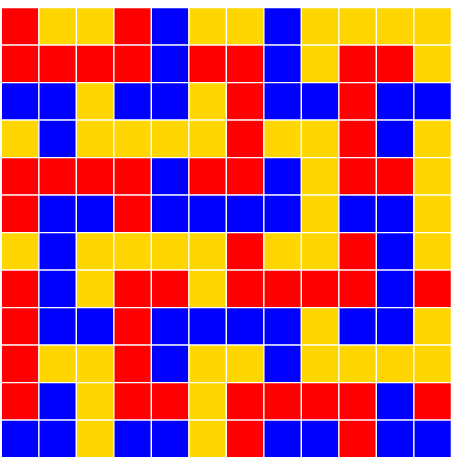}\\%30%[p. 11, top]
\mbox{(a)} &\mbox{(b)}
\end{array}\]
\caption{a. Roth's \cite{Roth1993} example 12-315-4 of species 30. b. Three-colouring by thin striping with redundant cells along the mirrors of negative slope.}\label{fig:13ab}
\end{figure}

For species $8_e$, 9, $27_o$, $28_e$, $28_n$, 30, and 31, successive mirrors that can be an odd multiple $m\geq 1$ of $3\delta$ apart can be put on dark lines' mirrors that are so spaced.
Intervening glide-reflection axes will then lie on intervening parallel mirrors, and then the fabric's perpendicular mirrors and glide-reflection axes ($27_o$, $28_e$, $28_n$, 30, 31) can be put on the perpendicular mirrors and axes respectively.
An example with the same motif as 12-183-1 (Figure~\ref{fig:10ab}b) arranged in a more complex way is 12-315-4 of species 30 in Figure~\ref{fig:13ab}b.
Another example is the species-31 fabric of Figure~\ref{fig:44a}a, which produces a slightly complicated zig-zag when thinly striped with three colours, visible in Figure~\ref{fig:44a}b if one considers the dark and pale versions of the three colours green, red, and blue to be merged to just the three colours green, red, and blue.
This example will be referred to later.
\begin{figure}%13, 8 before
\epsffile{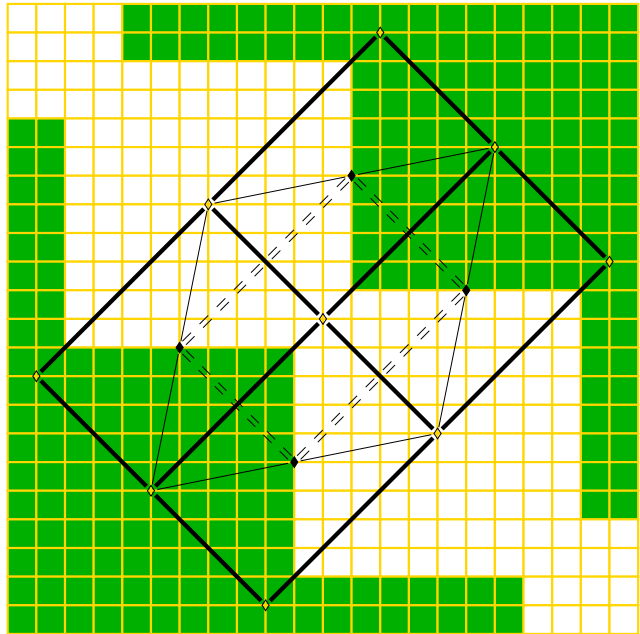}%30%
\caption{Order-48 fabric of species 29; the fabric of Figure~\ref{fig:44a}a doubled.}\label{fig:14a}
\end{figure}
\begin{figure}%14, 9 before
\epsffile{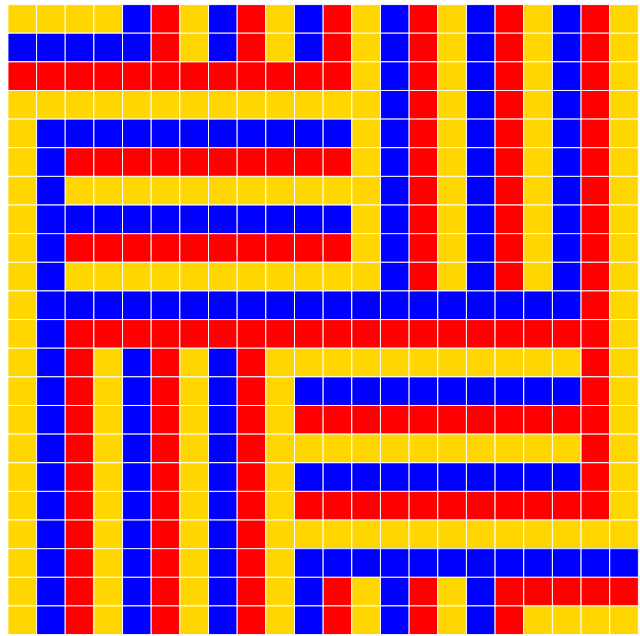}%30%[p. 10 of 29 with $27_e$ spacing]
\caption{Three-colouring of the design of Figure~\ref{fig:14a} by thin striping with redundant cells along the mirrors of negative slope.}\label{fig:14b}
\end{figure}

The previous paragraph can be repeated with $6\delta$ in place of $3\delta$ for species $27_e$ and 29.
An example is shown in Figures~\ref{fig:14a} and \ref{fig:14b}.

\begin{figure}%15, 10 before
\[\begin{array}{cc}
\epsffile{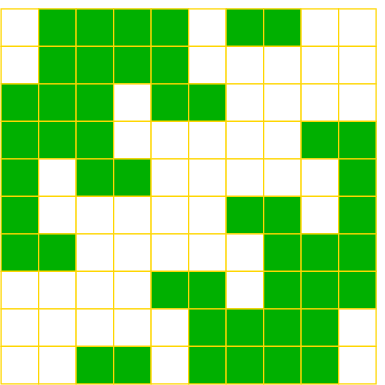} &\epsffile{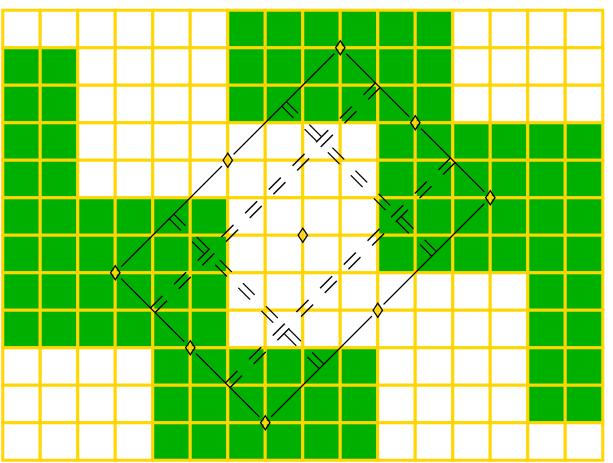}\\%30%[p. 12, p. 8 top]
\mbox{(a)} &\mbox{(b)}
\end{array}\]
\caption{Fabrics coloured in Figure~\ref{fig:17a}. a. Roth's \cite{Roth1993} example 12-111-2 of species $11_o$. b. Example of species $11_e$ from Figure 5a of \cite{Thomas2010a}.}\label{fig:40a}
\end{figure}
\begin{figure}%16, 11 before
\[\begin{array}{cc}
\epsffile{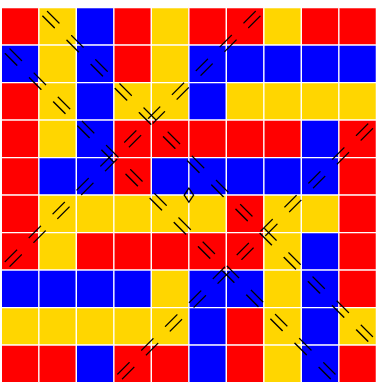} &\epsffile{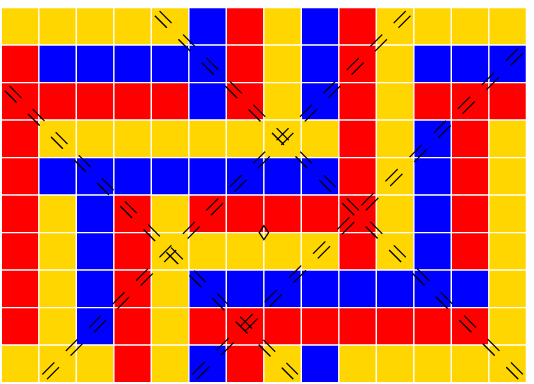}\\%30%[p. 12, p. 8 top]
\mbox{(a)} &\mbox{(b)}
\end{array}\]
\caption{Three-colourings by thin striping with some axes of fabric marked. a. Example of Figure~\ref{fig:40a}a. b. Example of Figure~\ref{fig:40a}b.}\label{fig:17a}
\end{figure}
\begin{figure}%17, 12 before
\epsffile{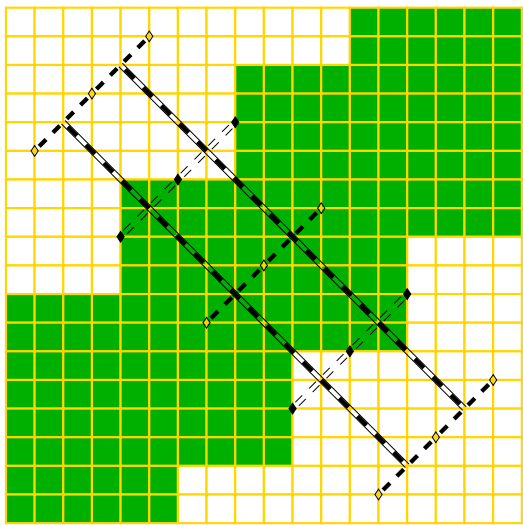}%30%[p. 12]
\caption{Design of a fabric of species 21 and order 96.}\label{fig:18a}
\end{figure}
\begin{figure}%18, 13 before
\noindent
\epsffile{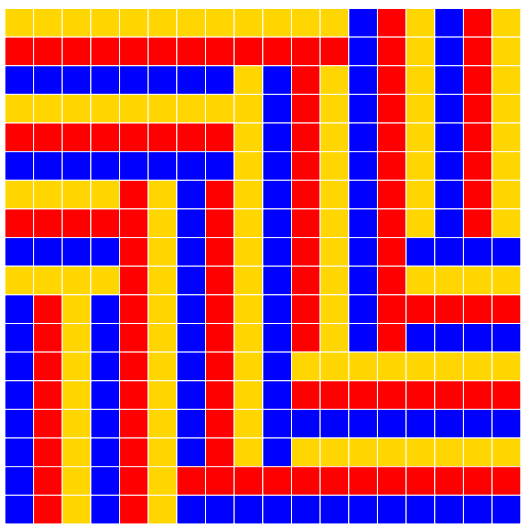}%30%[p. 12]
\caption{Three-colouring by thin striping of the fabric of Figure~\ref{fig:18a} with redundant cells along the axes of positive slope. Much of the symmetry disappears but a half-turn centre remains in the exact centre of the diagram and along a diagonal of positive slope.}\label{fig:18b}
\end{figure}
\begin{figure}%19, 14 before
\[\begin{array}{cc}
\epsffile{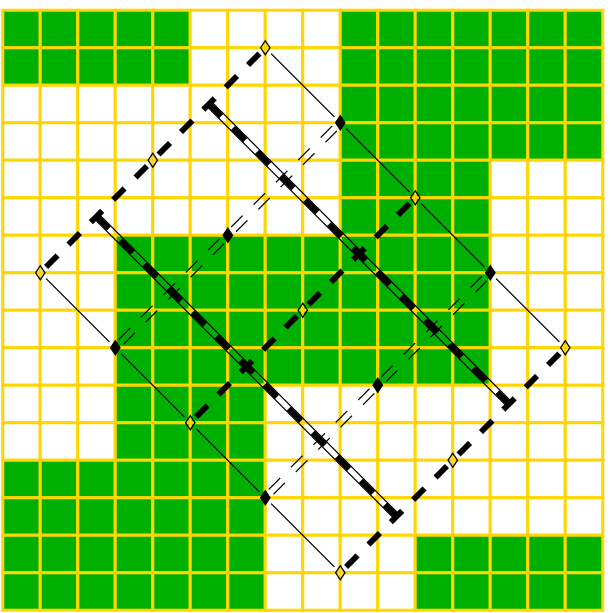} &\epsffile{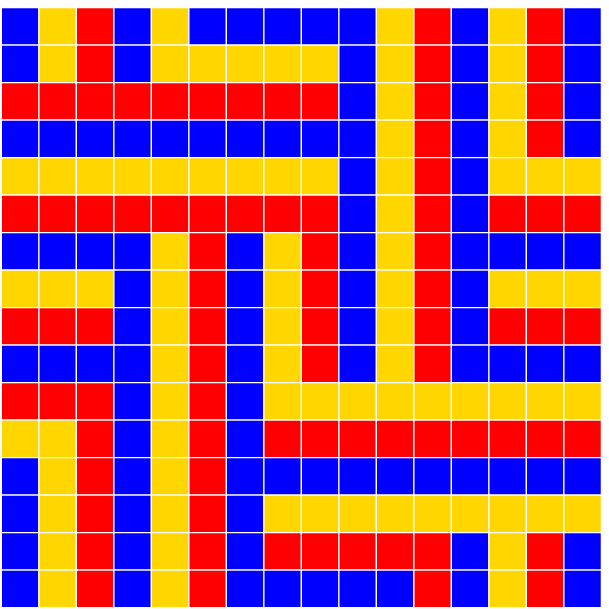}\\%30%[p. 9]
\mbox{(a)} &\mbox{(b)}
\end{array}\]
\caption{a. Species-22 example from Figure 11a of \cite{Thomas2010a}. b. Three-colouring by thin striping with redundant cells along the mirrors of negative slope, which appear as axes of glide-reflection.}\label{fig:20a}
\end{figure}
\begin{figure}%20, 15 before
\[\begin{array}{cc}
\epsffile{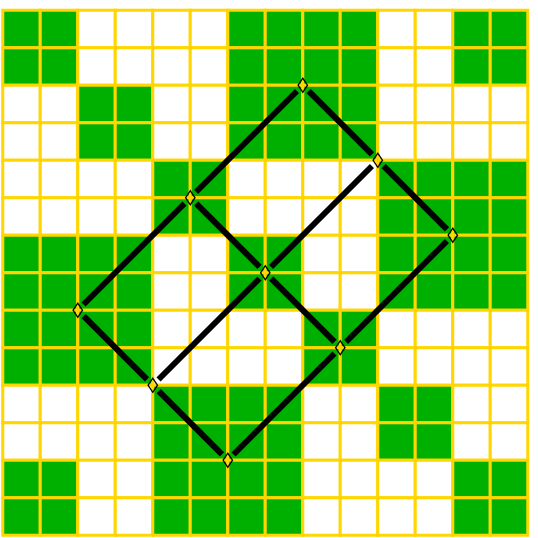} &\epsffile{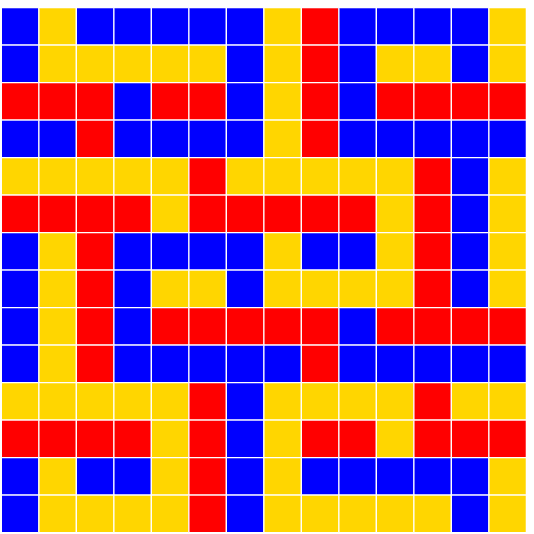}\\%30%[p. 15, top]
\mbox{(a)} &\mbox{(b)}
\end{array}\]
\caption{a. Subspecies-$25_e$ example; the fabric 12-619-1 of Figure~\ref{fig:27a} doubled. b. Three-colouring by thin striping with redundant cells along the mirrors of negative slope.}\label{fig:22a}
\end{figure}
For species 11, 13, 15, 17, 19, 21, 22, and 25, successive axes in mirror position or mirrors that can be an odd multiple $m\geq 1$ of $3\delta$ apart can be put on some dark lines' mirrors.
Then the fabric's perpendicular axes in mirror position and/or mirrors can be put on the mirrors perpendicular to the dark lines.
Examples are Roth's \cite{Roth1993} example of species $11_o$ shown in Figure~\ref{fig:40a}a, the example of species $11_e$ shown in Figure~\ref{fig:40a}b, an example of species 21 in Figure~\ref{fig:18a}, the example of species 22 from Figure 11a of \cite{Thomas2010a} in Figure~\ref{fig:20a}a, and an example of subspecies $25_o$ in Figure~\ref{fig:22a}a, coloured respectively in Figures~\ref{fig:17a}a, \ref{fig:17a}b, \ref{fig:18b}, \ref{fig:20a}b, and \ref{fig:22a}b.

\begin{figure}%21, 16 before
\[\begin{array}{cc}
\epsffile{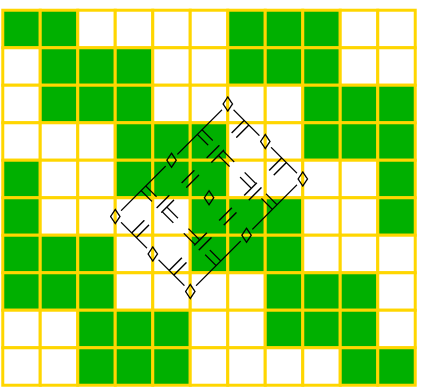} &\epsffile{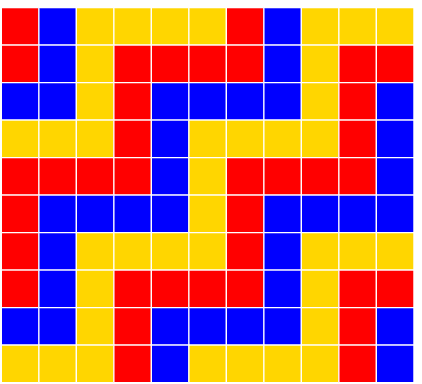}\\%30%[mid p.8]
\mbox{(a)} &\mbox{(b)}
\end{array}\]
\caption{a. 12-231-1 (Fig.~6a of \cite{Thomas2010a}) of subspecies $12_e$. b. Three-colouring by thin striping with redundant cells along the axes in mirror position (negative slope).}\label{fig:23a}
\end{figure}
\begin{figure}%22, 17 before
\[\begin{array}{cc}
\epsffile{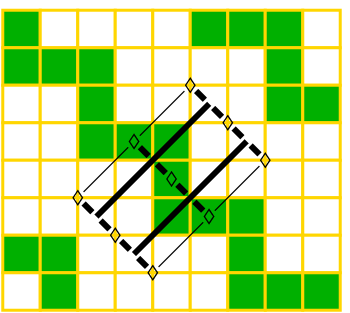} &\epsffile{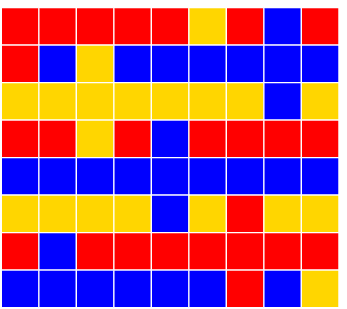}\\%30%[mid p.8]
\mbox{(a)} &\mbox{(b)}
\end{array}\]
\caption{a. 12-135-1 (Fig.~9b of \cite{Thomas2010a}) of subspecies $18_s$. b. Three-colouring by thin striping with redundant cells along the glide-reflection axes.}\label{fig:24a}
\end{figure}
\begin{figure}%23, 18 before
\[\begin{array}{cc}
\epsffile{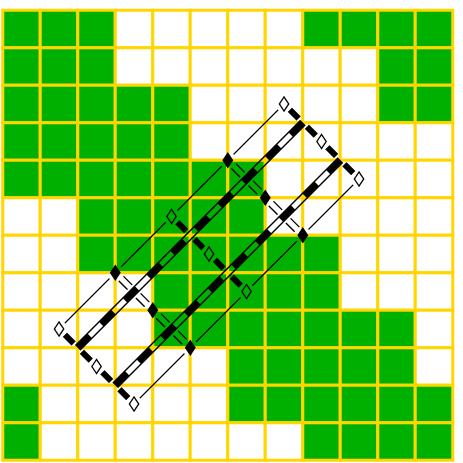} &\epsffile{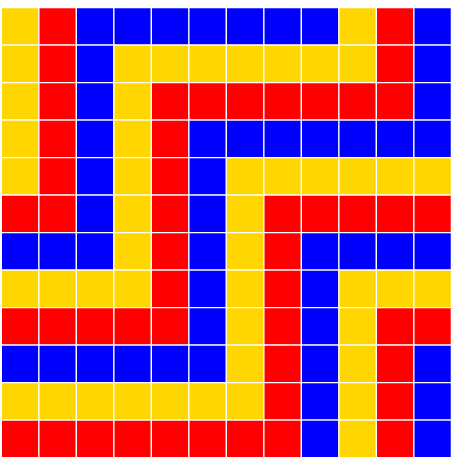}\\%30%[mid p.9]
\mbox{(a)} &\mbox{(b)}
\end{array}\]
\caption{a. 12-31-1 (Fig.~11b of \cite{Thomas2010a}) of subspecies $23_o$. b. Three-colouring by thin striping with redundant cells along the glide-reflection axes of negative slope.}
\label{fig:25a}
\end{figure}
\begin{figure}%24, 19 before
\[\begin{array}{cc}
\epsffile{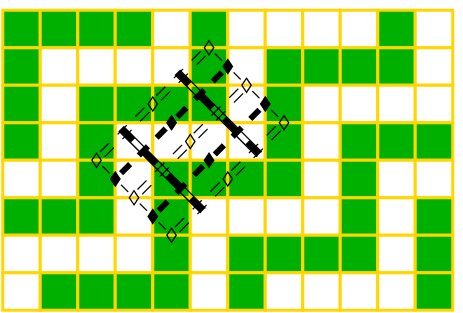} &\epsffile{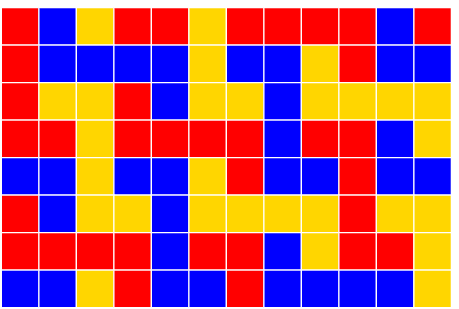}\\%30%[p.9 bottom]
\mbox{(a)} &\mbox{(b)}
\end{array}\]
\caption{a. 12-189-1 of subspecies $24_e$. b. Three-colouring by thin striping with redundant cells along the mirrors.}
\label{fig:26a}
\end{figure}
\begin{figure}%25, 20 before
\[\begin{array}{cc}
\epsffile{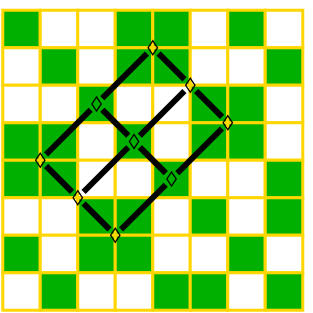} &\epsffile{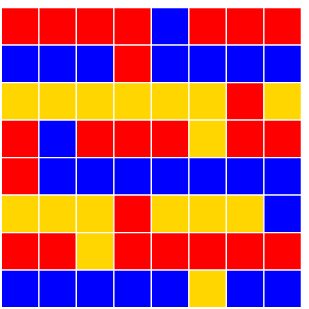}\\%30%[p.7 bottom]
\mbox{(a)} &\mbox{(b)}
\end{array}\]
\caption{a. 12-619-1 of subspecies $26_e$ (Fig.~4b of \cite{Thomas2010a}). b. Three-colouring by thin striping with redundant cells along the mirrors of negative slope.}\label{fig:27a}
\end{figure}

For species $12_e$, $14_e$, 16, 18, 20, 23, 24, and 26, successive glide-reflection axes in mirror position that can be an odd multiple $m\geq 1$ of $3\beta$ apart can be put on the dark lines' mirrors.
The fabric's perpendicular axes not in mirror position and perpendicular mirrors can be put on the axes and mirrors perpendicular to the dark lines.
Examples are Roth's \cite{Roth1993} example of species 12 in Figure~\ref{fig:23a}a and coloured in Figure~\ref{fig:23a}b (having mirrors in the pattern along what are axes in the design); Roth's \cite{Roth1993} example of species 18 in Figure~\ref{fig:24a}a and coloured in Figure~\ref{fig:24a}b (symmetry except for half-turns, of which there are a lot, and translations disappears); an example of a subspecies-$23_o$ fabric in Figure~\ref{fig:25a}a and coloured in Figure~\ref{fig:25a}b (symmetry gained as the side-preserving glide-reflections become mirrors too at the same time as the mirrors become visible glide-reflections); Roth's \cite{Roth1993} example of species 24 in Figure~\ref{fig:26a}; and Roth's \cite{Roth1993} example of species 26 in Figure~\ref{fig:27a}.

In the above paragraphs, the dimensions $3\beta$, $3\delta$, and $6\delta$ for three colours need only be changed to $p\beta$, $p\delta$, and $2p\delta$ for any odd number of colours $p$ to specify how thin striping can be done in each possible species.
\begin{Thm}%4
In each of the species $1$--$32$, that is, all those allowed by the quarter-turn ban, there are fabrics that can be perfectly coloured by thin striping with an odd number $2q+1$ of colours and redundant cells arranged as a $2q/1$ twill for $q=1, 2, \dots$.
\label{thm:5}
\end{Thm}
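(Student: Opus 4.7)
The plan is to argue that the entire analysis of Section~\ref{sect:3C} carries over verbatim when the $2/1$ twill is replaced by the $2q/1$ twill and each occurrence of $3$ in the spacings is scaled to $p = 2q+1$. This is the substitution announced in the sentence immediately before the theorem; the content of the proof is the verification that nothing else needs changing.

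First I would establish the structural symmetry analogy between the $2/1$ and the $2q/1$ twill. A general $(2q)/1$ twill diagram differs from the $2/1$ twill of Fig.~\ref{fig:8dea}a only in the width of each diagonal band of dark cells, as one sees by comparing Fig.~\ref{fig:8dea}a with Fig.~\ref{fig:8dea}b. Its symmetry group $G_2$ therefore has the same qualitative features: diagonal mirrors along each dark line, parallel glide-reflection axes along and between the dark lines, perpendicular mirrors and glide-reflection axes through every cell, and no quarter-turn. The only quantitative change is that successive parallel axes and mirrors are now $p\beta$, $p\delta$, or $2p\delta$ apart rather than $3\beta$, $3\delta$, or $6\delta$.

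Next I would re-examine each clump of species treated in Section~\ref{sect:3C}, for example $\{1_o, 2_o, 4_o\}$, $\{1_m, 2_m, 3, 5_o, 7_o\}$, $\{5_e, 6\}$, and on through $\{12_e, 14_e, 16, 18, 20, 23, 24, 26\}$, and check that each registration is preserved under the substitution. In every clump the construction consisted of placing axes or mirrors of the fabric on compatible axes or mirrors of the redundancy twill that are an odd multiple of the relevant spacing apart; exactly the same placement works with the scaled spacing, so that the corresponding element of $G_1$ carries the $2q/1$ redundancy pattern to itself and permutes the colours consistently. Lemma~3.1 of \cite{Thomas2013} together with Roth's Colouring Theorem then guarantees that the resulting cyclic assignment of $2q+1$ colours is perfect. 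The quarter-turn ban continues to exclude species $33$--$39$ exactly because the $2q/1$ twill contains no quarter-turn, so the scope of the theorem is indeed species $1$--$32$.

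The main obstacle will be confirming that, for every species in $1$--$32$ and every $q \geq 1$, a non-exceptional fabric with the required axis spacing actually exists. For $q = 1$ the examples in Section~\ref{sect:3C} are assembled ad hoc from Roth's catalogue and from doublings, but for large $q$ one needs a uniform existence statement: the isonemality constraint of \cite{Thomas2009} must admit period parallelograms whose diagonal dimension is an odd multiple of $p\delta$ (or of $2p\delta$ in the $\{27_e, 29\}$ case). I would address this by taking a three-colour example of the species and rescaling its lattice by $p/3$ in an isonemality-preserving way---essentially tiling the base design into an enlarged period rectangle---which preserves the species assignment and supplies axes at the required spacing. Once this uniform rescaling is checked clump by clump, the three-colour argument of Section~\ref{sect:3C} goes through unchanged and the theorem follows.
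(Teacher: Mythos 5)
Your overall plan is the paper's own: Theorem~\ref{thm:5} is proved there not by a separate argument but by the clump-by-clump registration of Section~\ref{sect:3C} (fabric axes placed on, between, or perpendicular to the dark lines of the redundancy twill, with spacings an odd multiple of $3\beta$, $3\delta$, or $6\delta$), followed by the single substitution $3\beta,3\delta,6\delta \mapsto p\beta, p\delta, 2p\delta$ for odd $p=2q+1$; your first two paragraphs, including the observation that the $2q/1$ twill has the same qualitative symmetry structure as the $2/1$ twill and no quarter-turn, reproduce exactly that argument, with Roth's Colouring Theorem \cite{Roth1995} and Lemma 3.1 of \cite{Thomas2013} playing the same role.

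The one step that would fail is the device in your last paragraph for the existence question: ``rescaling the lattice by $p/3$ in an isonemality-preserving way---essentially tiling the base design into an enlarged period rectangle.'' Cells are unit squares, so a design cannot be rescaled by a non-integer factor, and tiling copies of a period parallelogram reproduces the same infinite fabric with the same $G_1$ and hence the same axis spacings; it supplies no fabric whose axes are an odd multiple of $p\delta$ apart when the original's were an odd multiple of $3\delta$ apart. No such device is needed, and the paper uses none: within each species the spacing of the parallel axes (or the dimensions of the lattice unit) is a free parameter constrained only by the parity and isonemality conditions of \cite{Thomas2009,Thomas2010a,Thomas2010b}, and odd multiples of $p\beta$, $p\delta$, or $2p\delta$ satisfy those conditions for every odd $p$, so suitable fabrics of each species exist --- obtained from the catalogues or by constructions such as the doublings producing Figures~\ref{fig:14a} and \ref{fig:22a}a, not by dilating a fixed three-colour example. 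If you replace the rescaling claim by an appeal to the species structure in this way, your proof coincides with the paper's.
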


\section{Six colours}%4
\label{sect:6C}

\noindent When the number of colours is $4p$, the redundant twill is of the form $(4p-1)/1$ of subspecies $26_e$ (Figure~\ref{fig:28b}a).
When the number of colours is congruent to 2 mod 4, which is what we pursue in this section, the redundant twill is of the form $(4p+1)/1$ of subspecies $26_o$ (Figure~\ref{fig:28b}b).
In the former case the mirrors between the dark lines have a cell-centre \dia half-way between cell-centre \diaas in redundant cells and a cell-corner \dia half-way between cell-corner \diaas where redundant cells touch.
In the latter case, the cell-centre \dia in the middle lies between cell-corner \diaas in the dark lines and vice versa.
In both cases, there are only mirrors every $\beta$ perpendicular to the dark lines.
Therefore for all even numbers of colours no glide-reflection axis not in mirror position can be accommodated.
This rules out subspecies $1_e$, $1_o$, $2_e$, $2_o$, $8_o$, $18_o$, $18_e$, and $28_o$ and whole species 4, 10, 12, 14, 16, 20, 24, and 32 as for two colours. 
Call this the mirror-position requirement.
Subspecies left from partly affected species are $1_m$, $2_m$, $8_e$, $18_s$, $28_e$, and $28_n$.
The quarter-turn ban eliminates species 33--39.
Left are  $1_m$, $2_m$, 3, $5$--$7$, $8_e$, 9, $11$, $13$, $15$, $17$, $18_s$, $19$, 21--23, $25$, $26$, $27_e$ and 29,  $27_o$ and 30, $28_e$ and 31, and $28_n$.

\begin{figure}%26, 21 before
\[\begin{array}{cc}
\epsffile{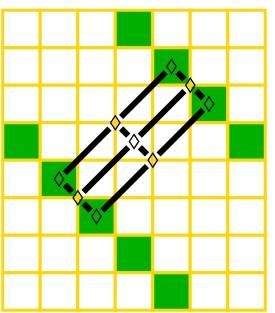} &\epsffile{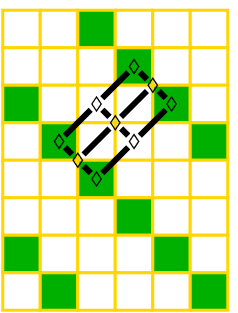}\\%30%
\mbox{(a)} &\mbox{(b)}
\end{array}\]
\caption{Twills for colouring with an even number of colours, redundant cells for thin striping dark. a. 5/1 twill for six-colouring. b. 3/1 twill for four-colouring.}\label{fig:28b}
\end{figure}

Because the redundant twill 5/1 for thin striping with six colours is of subspecies $26_o$, no further subspecies are ruled out than those just ruled out for all even numbers of colour.
Since the aim here is to show which species contain {\it some} perfectly colorable designs, $28_n$ need not be separately considered.
All of those listed do contain 6-colorable designs, i.e., all but those eliminated by the mirror-position requirement and the quarter-turn ban.
Since the symmetry group for the 5/1 twill is a subgroup of the symmetry group of both the 2/1 twill and plain weave, 6-colorability, which I shall use as a shorthand for perfect colorability by thin striping with 6 colours, implies both 3-colorability and 2-colorability by thin striping.
Three-colorability of a fabric does not ensure 6-colorability because of the mirror-position requirement.
Two-colorability of a fabric does not ensure 6-colorability because the symmetry group of plain weave is very much larger (both infinite of course) than that of the 5/1 twill, but all of the species that contain 2-colorable fabrics do also contain 6-colorable fabrics.
\begin{figure}%27, 22 before
\[\begin{array}{cc}
\epsffile{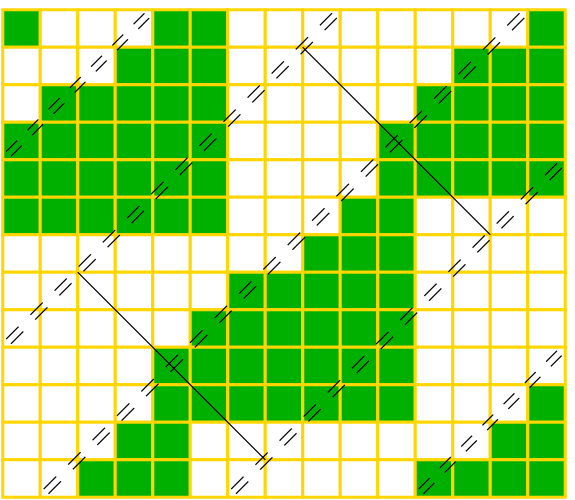} &\epsffile{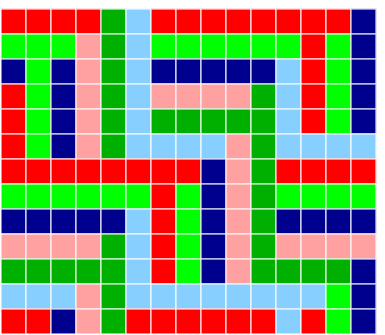}\\%30%&20% [p.8]
\mbox{(a)} &\mbox{(b)}
\end{array}\]

\caption{a. Design of fabric of species $1_m$ with glide 3.\hskip 15 pt b. Six-colouring of fabric of Figure~\ref{fig:30}a with lines of redundant cells across the axes of symmetry.}\label{fig:30}
\end{figure}

\begin{figure}%28, not 23
\[\begin{array}{cc}
\epsffile{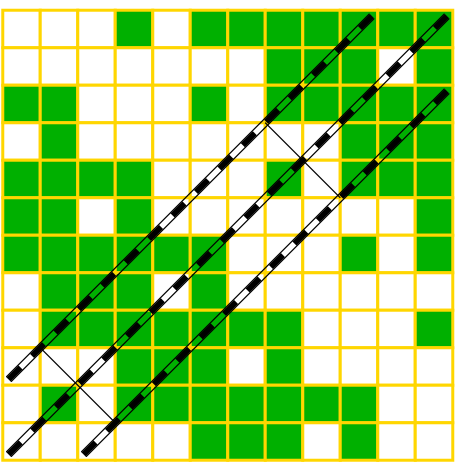} &\epsffile{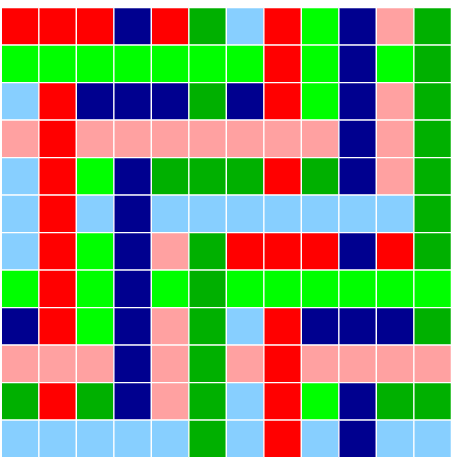}\\%30%new sheet
\mbox{(a)} &\mbox{(b)}
\end{array}\]
\caption{a. Design of 12-23-1, Roth's \cite{Roth1993} example of species 6 (Fig.~6b of \cite{Thomas2009}). b. Six-colouring by thin striping with lines of redundant cells across axes.}\label{fig:32a}
\end{figure}

When glide-reflection axes are in mirror position as they must be here, the length of a lattice unit, where there are only parallel axes, must be even (twice the glide).
For species $1_m$, $2_m$, 3, 6, and $7$ (where glide-reflection axes are also mirrors), axes of glide-reflection with glide a multiple $m\geq 1$ of $3\delta$ can be placed on mirrors perpendicular to the dark lines.
For example, a fabric of species $1_m$ is illustrated in Figure~\ref{fig:30} and a Roth \cite{Roth1993} example of species 6 is illustrated in Figure~\ref{fig:32a}.
For subspecies $5_e$, provided that the length of the lattice unit is a multiple $m\geq 1$ of $3\delta$ (there is no glide-reflection), the mirrors can also be placed on mirrors perpendicular to the dark lines.
Because the only symmetry axes are mirrors, which involve reflection in the plane of the fabric ($\tau$), the only symmetry visible on a single side of such a fabric---unless an extraneous symmetry is introduced---is translational.
To check the perfection of the colouring it is necessary to see the reverse.
Despite the involvement of $\tau$, the reverse is not the same coloured pattern as the obverse reflected because it is coloured differently.
We shall however continue the policy of not illustrating reverses.
Species $8_e$ and 9 with glides an odd multiple $m\geq 1$ of $3\delta$ can be treated like the previous subspecies.
An example from species 9 (so as to have side-preserving glide-reflections) is illustrated in Figure~\ref{fig:33}.
These arrangements have been chosen for simplicity of exposition; there are other possibilities.

\begin{figure}%29 not 24
\[\begin{array}{cc}
\epsffile{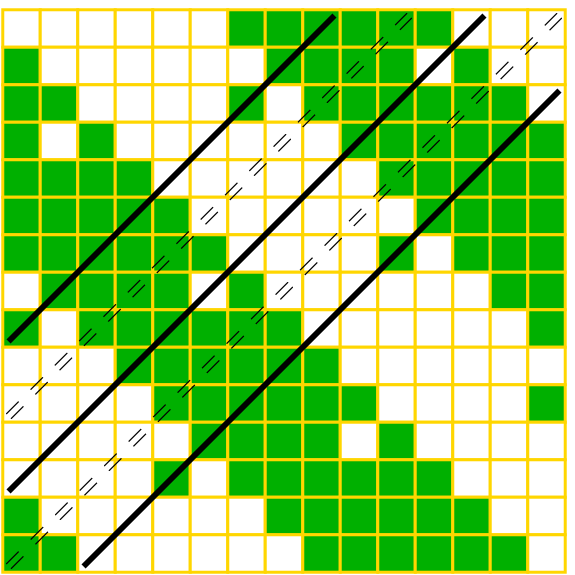} &\epsffile{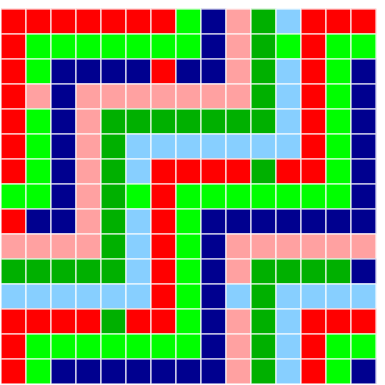}\\%30%&20%p. 9
\mbox{(a)} &\mbox{(b)}
\end{array}\]

\caption{a. Design of fabric of species 9. \hskip 15 pt b. Six-colouring of fabric of Figure~\ref{fig:33}a with lines of redundant cells across axes.}\label{fig:33}
\end{figure}
\begin{figure}%30 not 25
\epsffile{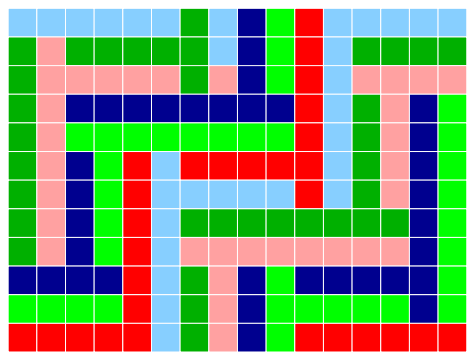}%30%
\caption{Six-colouring of fabric of Figure~\ref{fig:40a}b with lines of redundant cells along axes with negative slope.}
\label{fig:40b}
\end{figure}
\begin{figure}%31 not 26
\epsffile{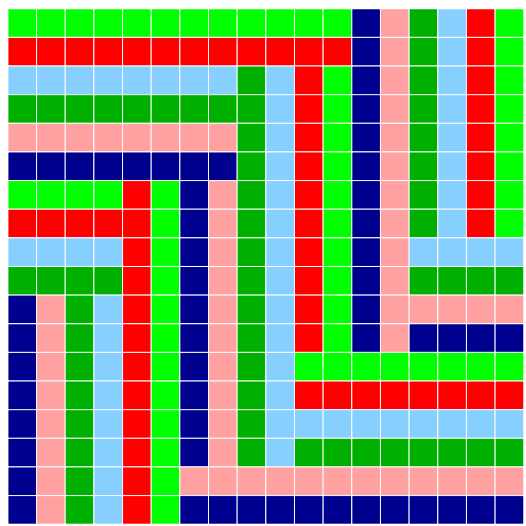}%30%top rt p. 4 from p.12 top of 3-col.
\caption{Six-colouring of the species-21 design of Figure~\ref{fig:18a} with lines of redundant cells along axes with positive slope.}
\label{fig:41(6)}
\end{figure}%32 not 27
\begin{figure}
\epsffile{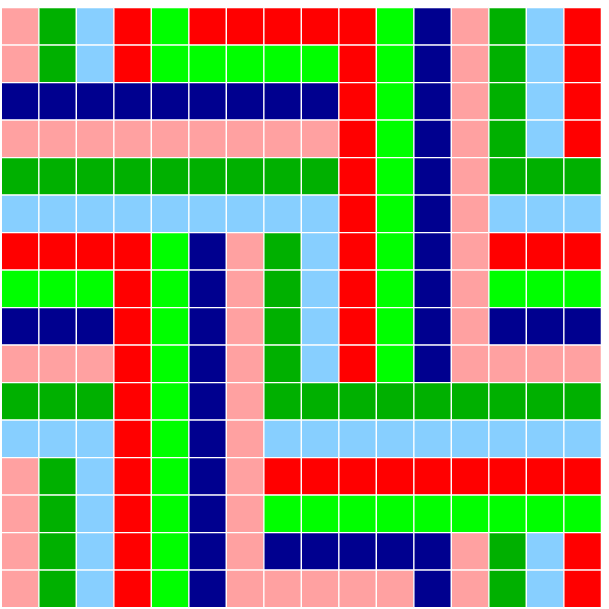}%30%perp 11a p.4
\caption{Six-colouring of the species-22 design of Figure~\ref{fig:20a}a with lines of redundant cells along mirrors/axes with negative slope.}\label{fig:42}
\end{figure}
\begin{figure}%33 not 28
\[\begin{array}{cc}
\epsffile{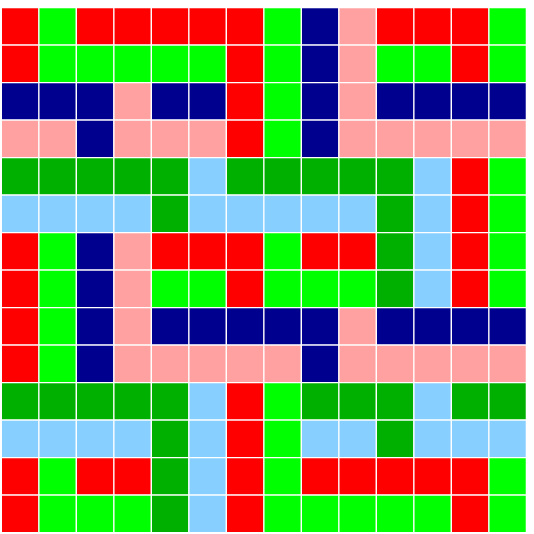} &\epsffile{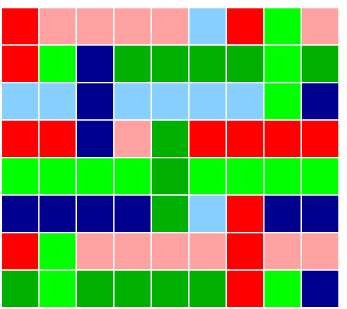}\\%30%p. 14 of 3-col. in middle rt p. 4 (43 from 21a --- file 22a) & perp 9b
\mbox{(a)} &\mbox{(b)}
\end{array}\]

\caption{a. Six-colouring of the species-25${}_e$ design of Figure~\ref{fig:22a}a with lines of redundant cells along mirrors with negative slope.\hskip 15 pt b. Six-colouring of species-$18_s$ fabric of Figure~\ref{fig:24a}a with redundant cells along axes of negative slope between the zig-zags.}\label{fig:43}
\end{figure}

For species 11, 13, 15, 17,, 19, 21, 22, and 25, successive axes or mirrors that can be an odd multiple $m\geq 1$ of $3\delta$ apart can be placed on dark lines' mirrors as in 3-colouring.
Then the fabric's perpendicular axes, which are in mirror position (the only constraint on them), can be put on mirrors perpendicular to the dark lines.
Examples are illustrated for species $11_e$ (Figg.~\ref{fig:40a}b and \ref{fig:40b}), 21 (Figg.~\ref{fig:18a} and \ref{fig:41(6)}), 22 (Figg.~\ref{fig:20a}a and \ref{fig:42}), and 25 (Figg.~\ref{fig:22a}a and \ref{fig:43}a).

\begin{figure}%34 not 29
\[\begin{array}{cc}
\epsffile{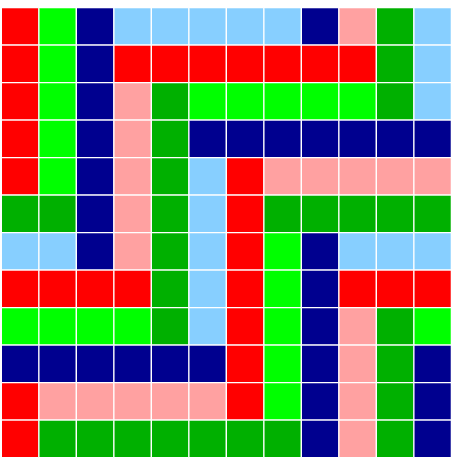} &\epsffile{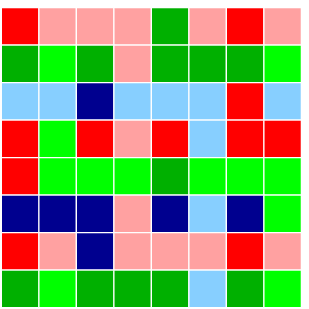}\\%30%perp  11b & perp 4b for 26e p.5.
\mbox{(a)} &\mbox{(b)}
\end{array}\]
\caption{a. Six-colouring by thin striping of species-$23_o$ fabric of Figure~\ref{fig:25a}a with lines of redundant cells along the axes of side-preserving glide-reflection that are not also mirrors.\hskip 15 pt b. Six-colouring of the species-$26_e$ design of Figure~\ref{fig:27a}a with lines of redundant cells along mirrors with negative slope.}\label{fig:38b}
\end{figure}

For species $18_s$, $23_o$, and $26_e$ with cell-centre \diaa, successive axes, axes, and mirrors respectively that can be an odd multiple $m\geq 1$ of $3\beta$ apart can be placed on the mirrors along and between the dark lines.
The fabric's mirrors perpendicular to these will fall on mirrors perpendicular to the dark lines.
Examples of 6-colourings are of the species-$18_s$ fabric of Figure~\ref{fig:24a}a in Figure~\ref{fig:43}b, the species-$23_o$ fabric of Figure~\ref{fig:25a}a in Figure~\ref{fig:38b}a, and the species-$26_e$ fabric of Figure~\ref{fig:27a}a, in Figure~\ref{fig:38b}b, where only the half-turns and translational symmetry remain in the pattern.

For species 27--31, central rectangles of $27_o$ and 30, $28_e$ and 31 (and $28_n$), can have one dimension an odd multiple $m\geq 1$ of $3\delta$ and $27_e$ and 29 can have one dimension an odd multiple $m\geq 1$ of $6\delta$.
The former can have these mirrors placed on dark lines' mirrors, axes falling between them, and the latter can have mirrors and axes placed on dark lines' mirrors.
Fabrics' perpendicular mirrors and axes fall on mirrors and axes perpendicular to the dark lines.
An example is a species-31 fabric (Figure~\ref{fig:44a}a) 6-coloured with redundant cells along the mirrors of negative slope in Figure~\ref{fig:44a}b.
The mirror symmetry and side-reversing half-turns are missing from the pattern, but the ordinary half-turns are there in the middle of what can be seen as tri-colour flags and the centre of what can be seen as flag poles.
(Each flag and pole has the other motif at both ends---in the same colour or colours---rather than just the normal one on account of the half-turn symmetries.)
\begin{figure}%35 not 30
\[\begin{array}{cc}
\epsffile{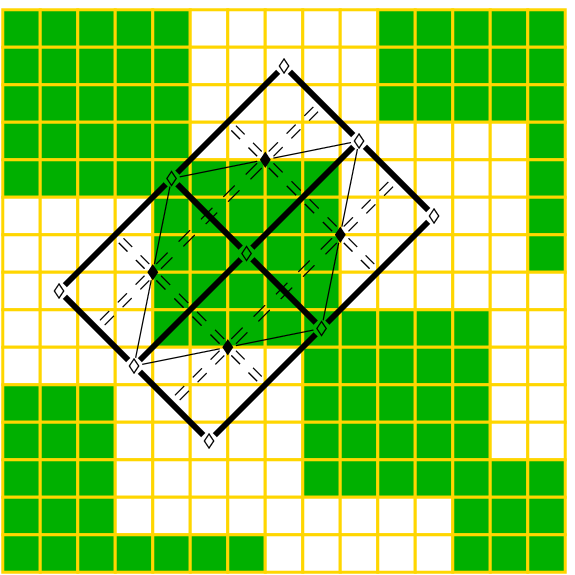} &\epsffile{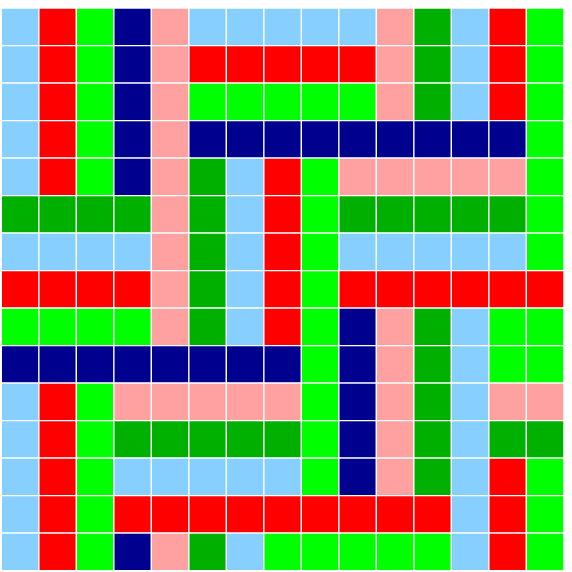}\\%30%perp 15b for 31 on p. 3.
\mbox{(a)} &\mbox{(b)}
\end{array}\]
\caption{a. Species-31 fabric (Fig.~15b of \cite{Thomas2010a}). b. Six-colouring by thin striping with lines of redundant cells along mirrors of negative slope.}\label{fig:44a}
\end{figure}

\begin{Thm}
In each of the species  $1$--$3$, $5$--$9$, $11$, $13$, $15$, $17$--$19$, $21$--$23$, and $25$--$31$, that is, all those allowed by the mirror-position requirement on glide-reflection axes and the quarter-turn ban, there are fabrics that can be perfectly coloured by thin striping with $4p+2$ colours and redundant cells arranged as a $(4p+1)/1$ twill for $p=1, 2, \dots$.
\label{thm:6}
\end{Thm}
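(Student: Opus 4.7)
The plan is to follow almost verbatim the case-by-case construction used in the six-colour discussion above, exploiting the fact that the redundant $(4p+1)/1$ twill lies in the same subspecies $26_o$ as the $5/1$ twill used there. Qualitatively its symmetry group has exactly the structure already analysed: mirrors perpendicular to the dark lines at every $\beta$, and along/between the dark lines a family of mirrors and half-turn centres whose spacings are proportional to the twill's order. The only thing that changes when one replaces $5/1$ by $(4p+1)/1$ is that spacings measured along the dark-line direction are multiplied by $(2p+1)/3$.

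I would begin by pointing out that the two structural obstructions identified in Section~6 --- the mirror-position requirement (no glide-reflection axis \emph{not} in mirror position can be fitted, because perpendicular to the dark lines the only available parallel axes are mirrors spaced $\beta$ apart) and the quarter-turn ban --- depend only on qualitative features common to all $(c-1)/1$ twills of subspecies $26_o$. They therefore apply to every $(4p+1)/1$ twill, confirming that the list of species in the statement is exhaustive: each omitted species is killed by one of these two obstructions.

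For the existence half, I would run through the groupings used above, replacing the distances $3\beta$, $3\delta$, $6\delta$ by $(2p+1)\beta$, $(2p+1)\delta$, $(4p+2)\delta$ respectively. Thus for species $1_m$, $2_m$, $3$, $6$, $7$ one places glide-reflection axes whose glide is an odd multiple $m\geq 1$ of $(2p+1)\delta$ on the mirrors perpendicular to the dark lines; for $5_e$, $8_e$, $9$ the lattice-unit length is the analogous odd multiple of $(2p+1)\delta$; for species $11$, $13$, $15$, $17$, $19$, $21$, $22$, $25$ successive parallel axes or mirrors separated by an odd multiple of $(2p+1)\delta$ go on dark-line mirrors, with the perpendicular (necessarily in-mirror-position) axes fitting on the $\beta$-spaced perpendicular mirrors; for $18_s$, $23_o$, $26_e$ one uses the $(2p+1)\beta$ spacing along the mirrors between the dark lines, matching cell-centre \diaa to cell-centre \diaa and cell-corner \diaa to cell-corner \diaa; and for species $27$--$31$ the relevant central rectangle has one dimension an odd multiple of $(2p+1)\delta$ (subspecies $27_o$, $28_e$, $28_n$, $30$, $31$) or of $(4p+2)\delta$ (subspecies $27_e$, $29$). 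Starting from any fabric realizing these spacings in the given species (such fabrics exist in each species by the catalogues cited), Roth's Colouring Theorem then certifies that the resulting assignment is perfect.

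The main obstacle is the routine but careful verification that, when $m$ is odd and all spacings are scaled by $(2p+1)$, the intermediate axes forced by the fabric's symmetry group continue to land on axes of the twill of the correct type --- a mirror remaining a mirror, an axis-not-in-mirror-position remaining one, and cell-centre versus cell-corner \diaas matching up. Because $(2p+1)$ is odd the parity of $m \cdot (2p+1)$ matches that of $m$, so the parity-based compatibility checks done for $p=1$ transfer unchanged; the remaining checks are the straightforward geometric inclusions sketched above.
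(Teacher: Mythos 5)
Your proposal is correct and follows essentially the same route as the paper: the exclusions come from the mirror-position requirement (only $\beta$-spaced mirrors perpendicular to the dark lines of a subspecies-$26_o$ twill) together with the quarter-turn ban, and the existence half is the paper's own species-by-species placement of axes and mirrors on the twill's mirrors, with the six-colour spacings $3\beta$, $3\delta$, $6\delta$ scaled to $(2p+1)\beta$, $(2p+1)\delta$, $(4p+2)\delta$. The only deviations are harmless (e.g.\ insisting on odd multiples where the paper allows any multiple for species $1_m$, $2_m$, $3$, $6$, $7$), which does not affect an existence statement.
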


\section{Four colours}%5
\label{sect:4C}

\noindent For four colours there is a further constraint because the species-$26_e$ lattice units of the redundant 3/1 twill (Fig.~\ref{fig:28b}b) are odd by even in $\delta$.
Species $27_o$ and 30 (with $27_o$ spacing) have glide-reflection axes an odd distance apart in $\delta$ in both perpendicular directions; they can therefore not be accommodated on mirrors of the redundant twill because the glides in both perpendicular directions must be odd.
In these two species, however, $27_e$ remains; only 30 is eliminated altogether.
However, $1_m$, $2_m$, 3, $5$--$7$, $8_e$, 9, $11$, $13$, $15$, $17$, $18_s$, $19$, $21$--$23$, $25$, $26$, $27_e$, $28$, 29, and 31 can be perfectly coloured by thin striping with four colours, that is, fabrics in all the species but 30 and those eliminated by the mirror-position requirement and the quarter-turn ban.
Since the redundant cells and symmetry axes for four colours are related as they are to those for two colours, the symmetry group of the redundant twill is a subgroup of that for two colours.
Accordingly, if a fabric is four-colorable by thin striping then it is two-colorable by thin striping but not the converse.
The biggest actual difference is the two-colorability of species 30 and $36_s$.
\begin{figure}%36 not 31
\[\begin{array}{cc}
\epsffile{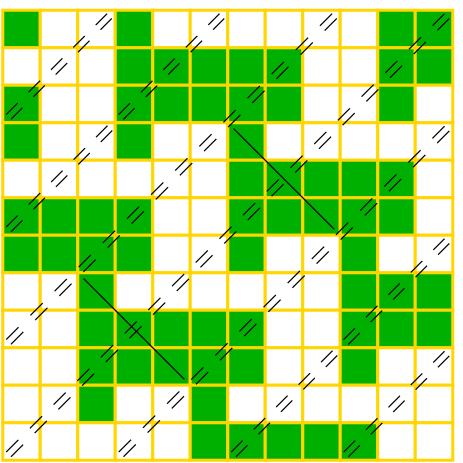} &\epsffile{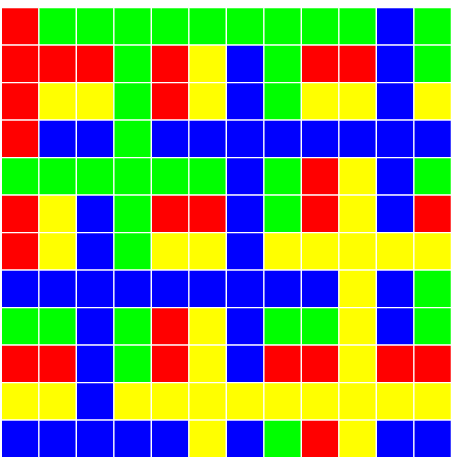}\\%30%new page replacing pp.2,3
\mbox{(a)} &\mbox{(b)}
\end{array}\]
\caption{a. Species-$1_m$ fabric. b. Four-colouring by thin striping with lines of redundant cells perpendicular to the axes.}\label{fig:45a}
\end{figure}

When glide-reflection axes are in mirror position as they must be here, the length of a lattice unit, where there are only parallel axes, must be even (twice the glide); as a result, the width must be odd \cite{Thomas2009} for species $1_m$, $2_m$, 3, $5_e$, and $7_e$.
For species $1_m$, $2_m$, 3, and $7_e$, axes of glide-reflection with glide a multiple $m\geq 1$ of $2\delta$ must therefore be placed on mirrors perpendicular to the dark lines.
For example, Figure~\ref{fig:45a}a shows a species-$1_m$ fabric with glide $2\delta$, and Figure~\ref{fig:45a}b shows it perfectly 4-coloured by thin striping with redundant cells perpendicular to the axes.
For subspecies $5_e$, provided that the length of the lattice unit is a multiple $m\geq 1$ of $2\delta$, the mirrors can be placed on mirrors perpendicular to the dark lines exactly like the glide-reflection axes two sentences back.
An example is Roth's \cite{Roth1993} example of subspecies $5_e$, illustrated in Figure~\ref{fig:46a}a, 4-coloured in Figure~\ref{fig:46a}b with mirrors across the dark lines as they must be.
\begin{figure}%37 not 32
\[\begin{array}{cc}
\epsffile{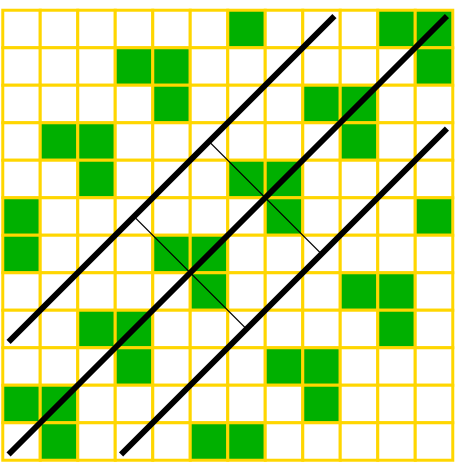} &\epsffile{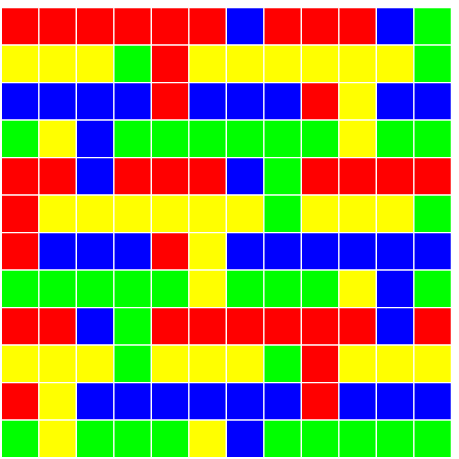}\\%30%(parallel 2a with back on p. 13 bottom)
\mbox{(a)} &\mbox{(b)}
\end{array}\]
\caption{a. The species-$5_e$ fabric 12-35-1 (Fig.~2a of \cite{Thomas2009}). b. Four-colouring by thin striping with lines of redundant cells perpendicular to the axes.}\label{fig:46a}
\end{figure}
Because the only symmetry axes are mirrors, which involve $\tau$, the only symmetry visible on a single side of such a fabric---unless an extraneous symmetry is introduced---is translational.
To check the perfection of the colouring it is necessary to see the reverse.
Despite the involvement of $\tau$, the reverse is not the same coloured pattern as the obverse reflected because it is coloured differently.
We shall however continue the policy of not illustrating reverses.
\begin{figure}%38 not 33
\[\begin{array}{cc}
\epsffile{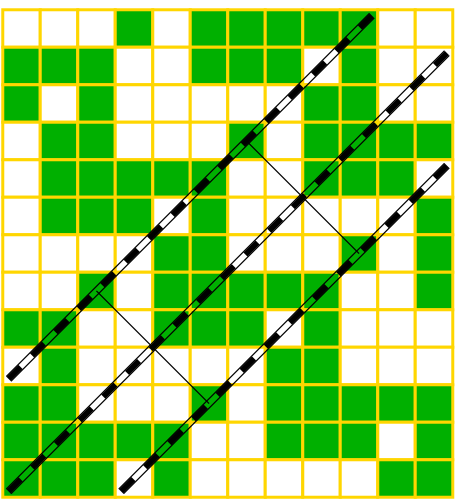} &\epsffile{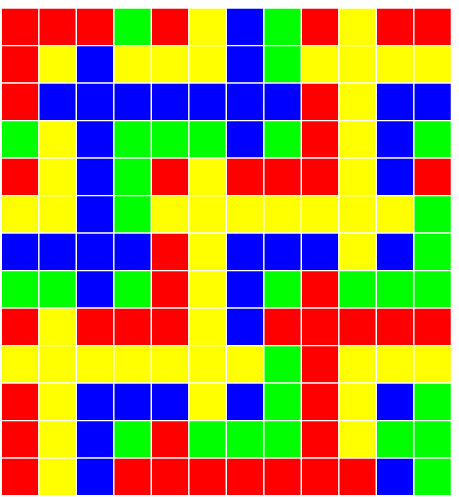}\\%30%parallel 16a p.3
\mbox{(a)} &\mbox{(b)}
\end{array}\]
\caption{a. A species-$7_e$ fabric (Fig.~16a of \cite{Thomas2009}). b. Four-colouring by thin striping with lines of redundant cells perpendicular to the axes.}
\label{fig:47a}
\end{figure}
An example of subspecies $7_e$ is illustrated in Figure~\ref{fig:47a}, where the reflective symmetry with mirrors coincident with the axes of the visible glide-reflections is not visible, but extraneous reflective symmetry introduced by the colouring has mirrors perpendicular to the axes through the lines of redundant cells $2\delta$ apart.
This is side-preserving reflective symmetry, which cannot happen in a design.

\begin{figure}%39 not 34
\[\begin{array}{ccc}
\epsffile{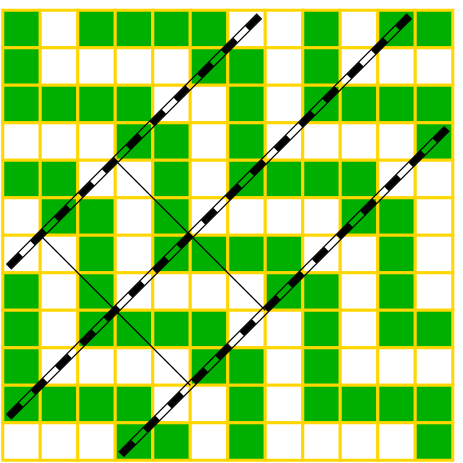} &\epsffile{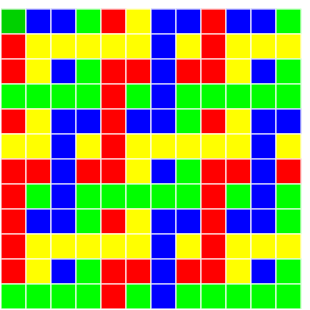} &\epsffile{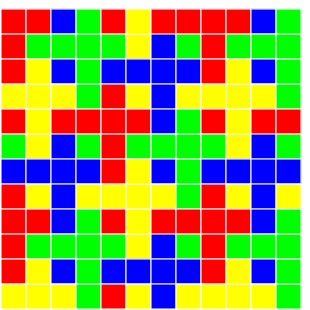}\\%30%parallel 6a p.4
\mbox{(a)} &\mbox{(b)} &\mbox{(c)}
\end{array}\]
\caption{a. 8-11-1 of species 6 (Fig.~6a of \cite{Thomas2009}).\hskip 15 pt b. Four-colouring by thin striping with lines of redundant cells along the mirrors. c. Second four-colouring with lines of redundant cells between the mirrors. Extraneous mirrors are introduced along the lines of redundant cells.}
\label{fig:48a}
\end{figure}

Species-6 lattice units have width a multiple $m\geq 1$ of $2\delta$ and so can have their bounding mirrors placed along or between dark lines' mirrors, their intervening mirrors falling on mirrors between or on dark lines. 
They cannot be arranged perpendicular to the design's axes because the glides are odd.
An example is Roth's \cite{Roth1993} example of species 6 in Figure~\ref{fig:48a}a, coloured two different ways in Figures~\ref{fig:48a}b and c.

\begin{figure}%40 not 35
\[\begin{array}{cc}
\epsffile{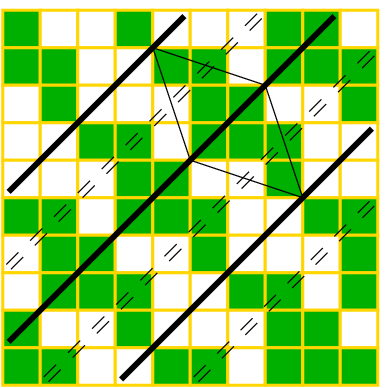} &\epsffile{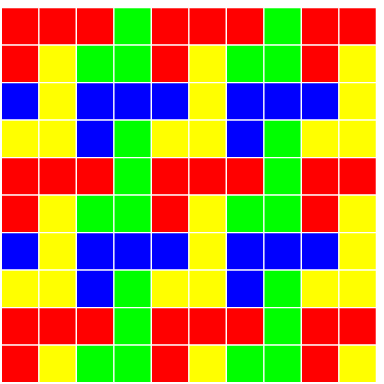}\\%30%parallel 7b p. 4
\mbox{(a)} &\mbox{(b)}
\end{array}\]
\caption{a. Species-$9$ fabric 8-19-2 (Fig.~7b of \cite{Thomas2009}). b. Four-colouring by thin striping, which looks thick, with lines of redundant cells along the mirrors. Extraneous side-preserving glide-reflections are along the mirrors.}\label{fig:49a}
\end{figure}
For species $8_e$ and 9, successive mirrors a multiple $m\geq 1$ of $2\delta$ apart can be placed along the dark lines' mirrors or all between them if the glides are even.
The axes of glide-reflection will then fall on mirrors in or between the dark lines.
Odd glides too can be accommodated by making sure that the mirrors fall all on dark lines or all between them, since that makes room for the axes on or between the lines.
An example with odd glides and the redundant cells along the mirrors is Roth's \cite{Roth1993} example of species 9 (for the sake of having side-preserving glide-reflections) in Figure~\ref{fig:49a}.

\begin{figure}%41 not 36
\[\begin{array}{cc}
\epsffile{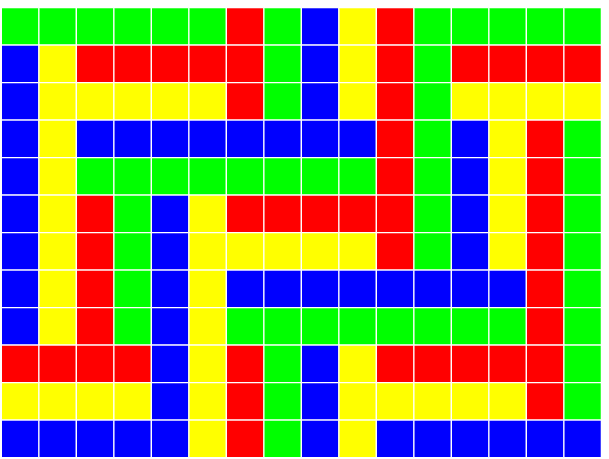} &\epsffile{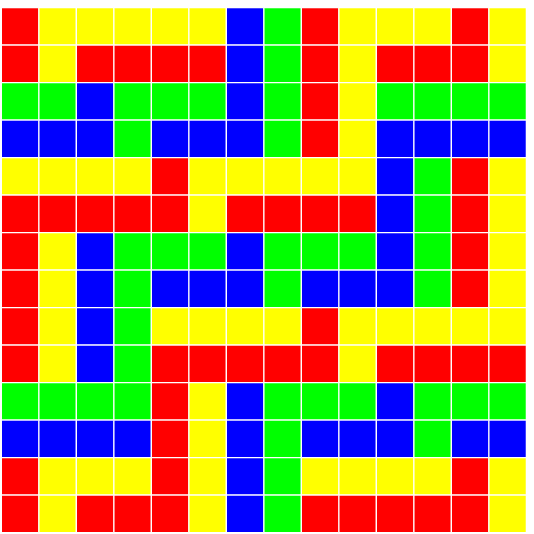}\\%30%a from Fig33=perp5a p.5, b 54 perp 11a for 22 (Fig 20a)
\mbox{(a)} &\mbox{(b)}
\end{array}\]
\caption{a. Species-$11_e$ fabric of Figure~\ref{fig:40a}b (Fig.~5a of \cite{Thomas2010a}) 4-coloured by thin striping with redundant cells along the axes of positive slope. b. Four-colouring of fabric of Figure~\ref{fig:22a}a of species $25_e$ with redundant cells along mirrors of positive slope.}\label{fig:50a}
\end{figure}
\begin{figure}%42 not 37
\[\begin{array}{cc}
\epsffile{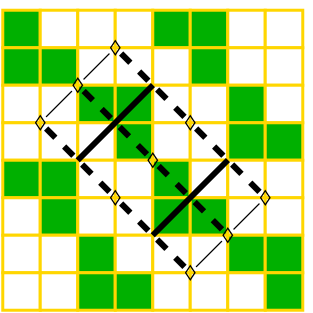} &\epsffile{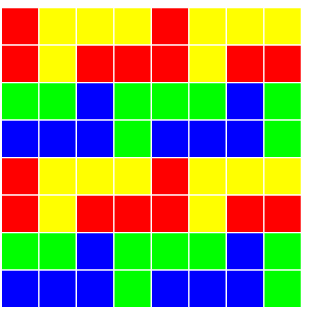}\\%30% perp 9a II for 17e p.5 50d has red.cells along alternate axes:I.
\mbox{(a)} &\mbox{(b)}
\end{array}\]
\caption{a. 8-19-7 of species $17_e$. b. Four-colouring by thin striping with redundant cells along mirrors.}\label{fig:50b}
\end{figure}
\begin{figure}%43 not 38
\[\begin{array}{cc}
\epsffile{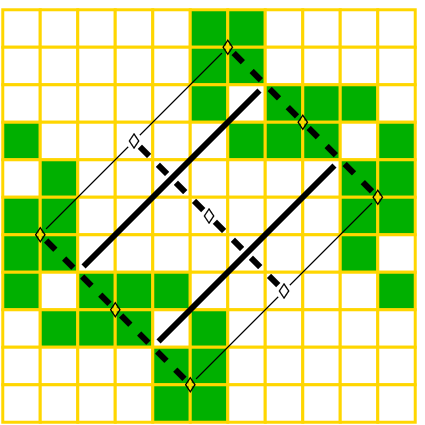} &\epsffile{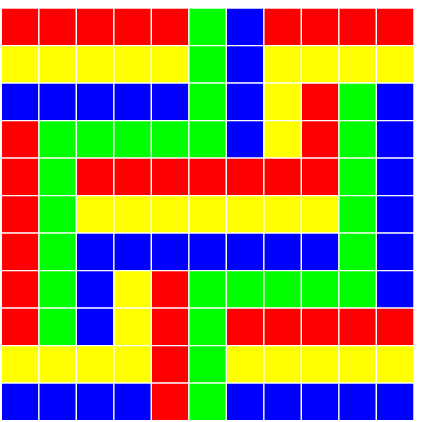}\\%30%made-up ex pp. 5,6
\mbox{(a)} &\mbox{(b)}
\end{array}\]
\caption{a. A fabric of species $18_s$. b. Four-colouring by thin striping with redundant cells along mirrors.}\label{fig:51a}
\end{figure}
\begin{figure}%44 not 39
\[\begin{array}{cc}
\epsffile{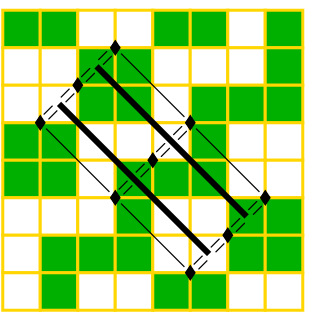} &\epsffile{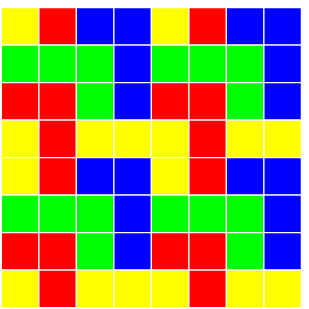}\\%30%perp 9c p.6
\mbox{(a)} &\mbox{(b)}
\end{array}\]
\caption{a. 8-19-4 of species $19_e$ (Fig.~9c of \cite{Thomas2010a}). b. Four-colouring by thin striping with redundant cells along axes.}\label{fig:52a}
\end{figure}
\begin{figure}%45 not 40
\[\begin{array}{cc}
\epsffile{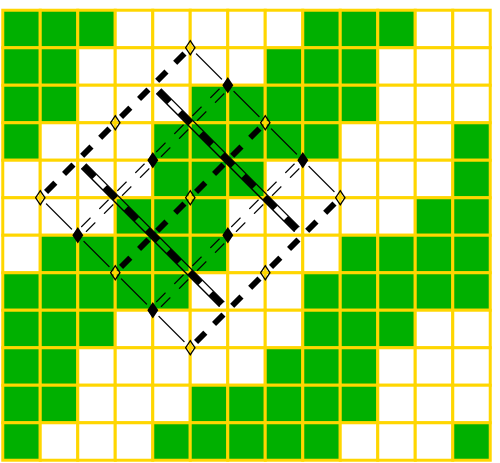} &\epsffile{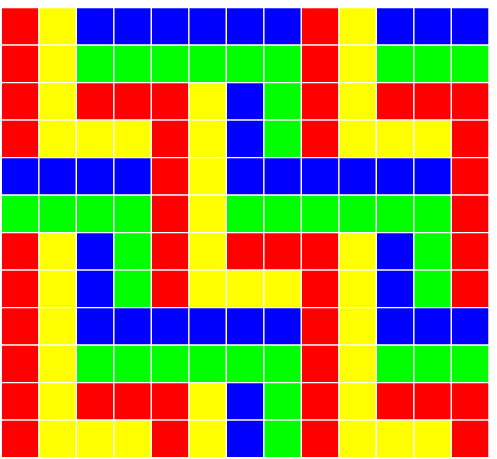}\\%30%Roth's example 8-7-2 perp 10a for 21
\mbox{(a)} &\mbox{(b)}
\end{array}\]
\caption{a. 8-7-2 of species $21$ (Fig.~10a of \cite{Thomas2010a}). b. Four-colouring by thin striping with redundant cells along mirrors.}\label{fig:53a}
\end{figure}
\begin{figure}%46 not 42
\[\begin{array}{cc}
\epsffile{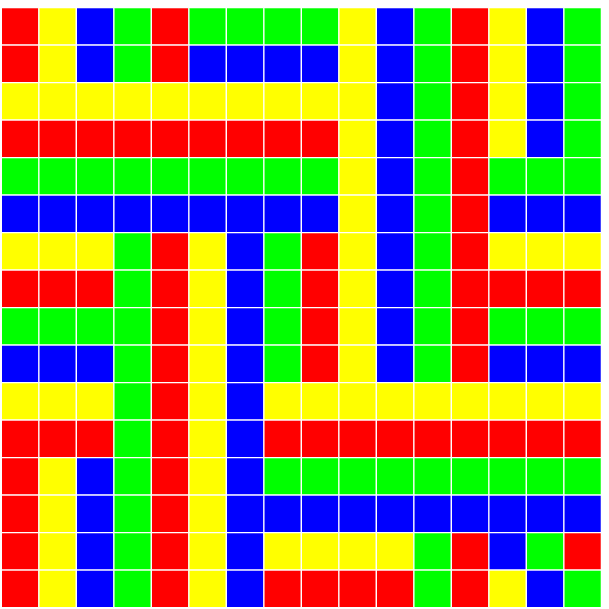} &\epsffile{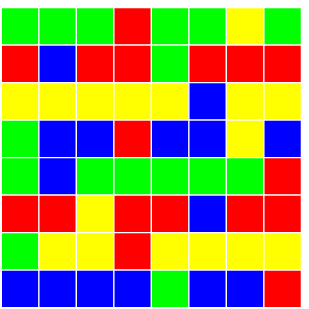}\\%30%made-up 3-col. p. 14 (Fig 21a) on p.7 for 25e, b. perp 4b (Fig 26a) for 26e
\mbox{(a)} &\mbox{(b)}
\end{array}\]
\caption{a. Species-22 fabric of Figure~\ref{fig:20a}a (Fig.~11a of \cite{Thomas2010a}) 4-coloured by thin striping with redundant cells along the axes that are not mirrors (positive slope). b. Four-colouring of fabric of Figure~\ref{fig:27a}a of species $26_e$ with redundant cells along mirrors of positive slope.}\label{fig:54}
\end{figure}
\begin{figure}%47 not 41
\[\begin{array}{cc}
\epsffile{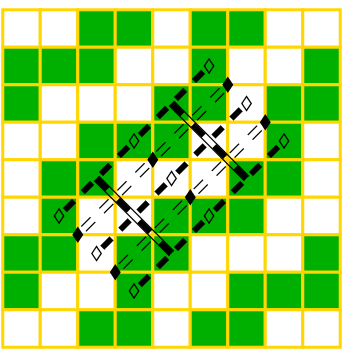} &\epsffile{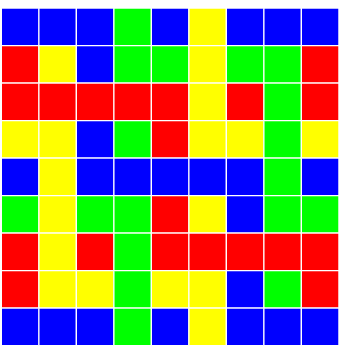}\\%30%p.7
\mbox{(a)} &\mbox{(b)}
\end{array}\]
\caption{a. Roth's \cite{Roth1993} species-$23_e$ example 8-27-1. b. Four-colouring with redundant cells along mirrors.}\label{fig:55a}
\end{figure}

For species  $11_e$, $13_e$, $15_e$, $17_e$, $18_s$, $19_e$, 21, 22, $23_e$, $25_e$, and $26_e$, closest-together axes or mirrors ($17_e$, $18_s$, 21, $23_e$ mirrors; $19_e$, 22 axes) with separation a multiple $m\geq 1$ of $2\delta$ can be placed along dark lines' mirrors.
The fabric's perpendicular axes or mirrors will fall on mirrors perpendicular to the dark lines.
Examples are the fabric of Figure~\ref{fig:40a}b of species $11_e$ coloured in Figure~\ref{fig:50a}a,  Roth's \cite{Roth1993} species-$17_e$ example in Figure~\ref{fig:50b}, a fabric of species $18_s$ in Figure~\ref{fig:51a}, Roth's \cite{Roth1993} species-$19_e$ example in Figure~\ref{fig:52a}, Roth's \cite{Roth1993} species-21 example in Figure~\ref{fig:53a}, the fabric of species 22 in Figure~\ref{fig:20a}a coloured in Figure~\ref{fig:54}a, Roth's \cite{Roth1993} species-$23_e$ example in Figure~\ref{fig:55a}, the fabric of species $25_e$ in Figure~\ref{fig:22a}a coloured in Figure~\ref{fig:50a}b, and the fabric of species $26_e$ in Figure~\ref{fig:27a}a coloured in Figure~\ref{fig:54}b.

\begin{figure}%48 not 43
\[\begin{array}{cc}
\epsffile{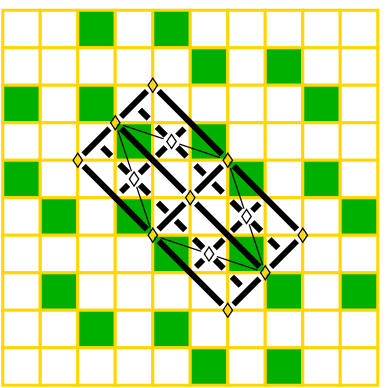} &\epsffile{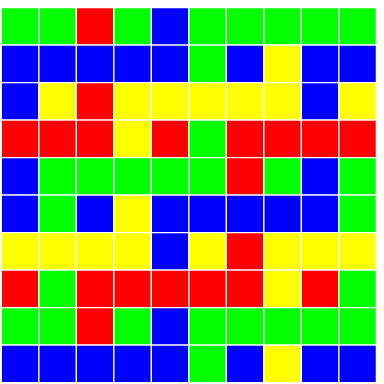}\\%30%perp 13b p.9 (file 12b) for 28n (Cf. 23e.)
\mbox{(a)} &\mbox{(b)}
\end{array}\]
\caption{a. Fabric 8-5-1 of species $28_n$ (Fig.~13b of \cite{Thomas2010a}). b. Four-colouring with redundant cells along mirrors of positive slope}\label{fig:58a}.
\end{figure}
\begin{figure}%49 not 44
\epsffile{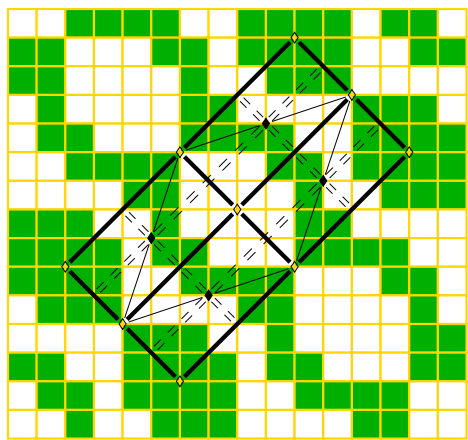}%30%perp 15a p.9(file 14a) for 29
\caption{Roth's \cite{Roth1993} example 16-2499 of species $29$ (Fig.~15a of \cite{Thomas2010a}).}\label{fig:59a}
\end{figure}
\begin{figure}%50 not 45
\[\begin{array}{cc}
\epsffile{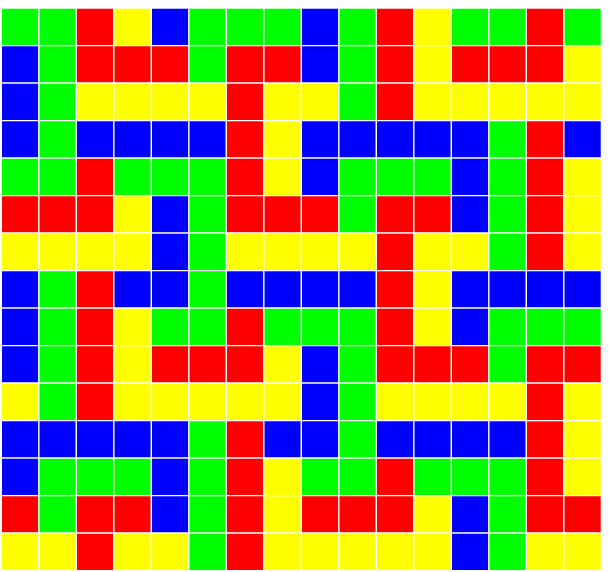} &\epsffile{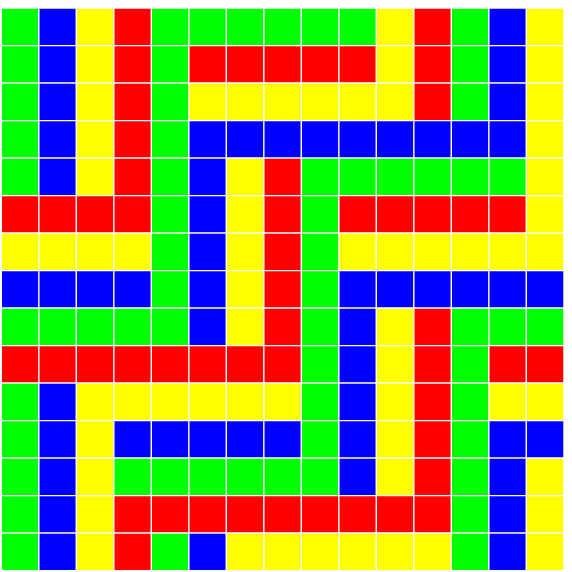}\\%30%a. perp 15b p. 9 (Fig 40a---file44a) for 3 
\mbox{(a)} &\mbox{(b)}
\end{array}\]
\caption{a. Four-colouring of the fabric of Figure~\ref{fig:59a} with redundant cells along mirrors of negative slope. b. Four-colouring of the fabric of Figure~\ref{fig:44a} with redundant cells along mirrors of positive slope.}\label{fig:60}
\end{figure}
Species  $27_e$, $28_e$, $28_n$, 29 with the spacing of $27_e$, and 31 with the spacing of $28_e$, can have their mirrors with separation a multiple $m\geq 1$ of $2\delta$ placed along the dark lines' mirrors.
Intervening fabric axes will then lie along intervening mirrors, and perpendicular fabric axes and mirrors will lie along perpendicular mirrors.
Examples are fabric 8-5-1 of species $28_n$ in Figure~\ref{fig:58a} (compare Fig.~\ref{fig:55a}b), Roth's \cite{Roth1993} example of species 29 in Figure~\ref{fig:59a} coloured in Figure~\ref{fig:60}a, and the species-31 fabric of Figure~\ref{fig:44a}a coloured in Figure~\ref{fig:60}b.

The above discussion for four colours contains the expression `multiple of $2\delta$' repeatedly.
For $4p$ colours, $2\delta$ needs to be replaced by $2p\delta$.
\begin{Thm}
In each of the species  $1$--$3$, $5$--$9$, $11$, $13$, $15$, $17$, $18$, $19$, $21$--$23$, $25$--$29$, and $31$, that is, all those allowed by the mirror-position requirement on glide-reflection axes and the quarter-turn ban except $30$, there are fabrics that can be perfectly coloured by thin striping with $4p$ colours and redundant cells arranged as a $(4p-1)/1$ twill for $p=1, 2, \dots$.
\label{thm:7}
\end{Thm}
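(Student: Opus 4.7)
The plan is to establish Theorem~\ref{thm:7} in two stages: first rule out every species not on the stated list by combining the three relevant obstructions, and then exhibit, for each admissible species, a fabric whose axes can be placed on axes of the redundant $(4p-1)/1$ twill so that, via Lemma~\ref{lem:1} and Roth's Colouring Theorem, the colouring is perfect.

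For the exclusions I would invoke three arguments in turn. The quarter-turn ban, already established in Section~\ref{sect:prelim}, removes species $33$--$39$. The mirror-position requirement, established at the opening of Section~\ref{sect:6C} and valid for every even number of colours, removes subspecies $1_e$, $1_o$, $2_e$, $2_o$, $8_o$, $18_o$, $18_e$, $28_o$ and the whole species $4$, $10$, $12$, $14$, $16$, $20$, $24$, $32$, because perpendicular mirrors of the $(4p-1)/1$ twill occur only every $\beta$ and cannot accommodate a fabric glide-reflection axis that is not in mirror position. The obstruction specific to $4p$ colours (and absent from the $4p+2$ case handled in Theorem~\ref{thm:6}) comes from the $(4p-1)/1$ twill having a lattice unit that is odd by even in $\delta$: since species $30$, with the spacing inherited from $27_o$, has perpendicular glide-reflection axes an odd distance apart in $\delta$ in both directions, no placement of its axis grid onto the twill's mirror grid can respect both parities. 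This is the only further species eliminated.

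For sufficiency I would scale up, from $p=1$ to arbitrary $p$, the family of explicit placements carried out in Section~\ref{sect:4C}, replacing every occurrence of $2\delta$ by $2p\delta$. The admissible species fall into the same groups exhibited there: $1_m$, $2_m$, $3$, $7_e$ have their glide-reflection axes laid perpendicular to the dark lines of the twill; $5_e$ uses its lattice-unit mirrors in the same position; species $6$ places its mirrors along or between the dark lines; species $8_e$ and $9$ use mirrors along or between dark lines with the intervening axes of glide-reflection falling on intervening mirrors; the block $11_e$, $13_e$, $15_e$, $17_e$, $18_s$, $19_e$, $21$, $22$, $23_e$, $25_e$, $26_e$ places its closest-together axes or mirrors along dark lines' mirrors and its perpendicular axes on mirrors perpendicular to the dark lines; and $27_e$, $28_e$, $28_n$, $29$, $31$ place lattice-unit mirrors along dark lines' mirrors with intervening fabric axes falling on intervening mirrors.

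The main obstacle, and the step which must be verified case by case, is showing that in each placement the $G_2$ of the resulting coloured fabric really is a subgroup of the $G_2$ of the $(4p-1)/1$ twill; only then does Roth's Colouring Theorem guarantee perfection of the colouring via Lemma~\ref{lem:1}. This reduces in every case to a parity check on the fabric's lattice-unit dimensions — one must know that the available even periods can be written as multiples of $2p\delta$ while avoiding the odd-odd combination that ruled out species $30$. Since the odd-odd obstruction is tight only for $30$, every remaining species on the list admits the scaled placement, and the existence claim for each admissible species follows.
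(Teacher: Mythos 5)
Your proposal is correct and follows essentially the same route as the paper, whose ``proof'' of Theorem~\ref{thm:7} is simply the body of Section~\ref{sect:4C}: the quarter-turn ban, the mirror-position requirement, and the odd-by-even lattice unit of the $(4p-1)/1$ twill eliminating only species $30$ (via the odd perpendicular glides of the $27_o$/$30$ spacing), followed by the same species-by-species placements with $2\delta$ replaced by $2p\delta$. Your added emphasis on checking that the fabric's $G_2$ is a subgroup of the twill's $G_2$ is exactly the criterion the paper sets up in Section~\ref{sect:prelim}, so there is no substantive difference.
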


\section{Colouring woven flat tori}%
\label{sect:tori}

\noindent The weaving and colouring by thick striping of flat tori was discussed in \S\S 2 and 7 of \cite{Thomas2013}. The possibility of weaving Klein bottles is mentioned there but not discussed. The opposite sides of a period parallelogram of a design or pattern can be identified to weave or colour a woven torus, e.g., a quarter of Figure~\ref{fig:8dea}a. The rectangular lattice unit in the same figure or a square multiple of the order-by-order square, like the whole of Figure~\ref{fig:8dea}a, can also be used. In the latter case, there are $mn$ strands in each direction if there are $m$ by $m$ $n\times n$ squares, but in the former case there are usually rather fewer, in this case one strand in each direction. Too few strands are obviously an obstacle to interesting colouring.

As redundancy of cells plays no role in choosing period parallelograms in the plane to map to tori, the patterns with no redundancy discussed in section~\ref{sect:noRed} behave, if their stripes are thick, like those with thick stripes discussed in \S 7 of \cite{Thomas2013} --- another reason why section~\ref{sect:noRed} should have been in \cite{Thomas2013}. For the same reason, if their stripes are thin, they behave like those with thin stripes and twilly redundancy, to which we now turn.

The designs of the fabrics coloured in sections~\ref{sect:noRed} and \ref{sect:3C}--\ref{sect:4C} can all be used to weave flat tori. For colouring the torus, the topological constraint `that the rectangle to be mapped to the torus must be a period parallelogram' has in addition `that the rectangle must also be a period parallelogram of the colouring'.

This topic was ignored altogether in the papers \cite{Thomas2011} and \cite{Thomas2012} on striping with two colours. In that case the parities of the dimensions of lattice units or of the smallest oblique rectangle (at $\pi/4$) enclosing a rhombic lattice unit (e.g., Figure~\ref{fig:14a}) indicate whether they are suitable for two-colouring a torus with thin striping. The species at issue, i.e., those that can be two-coloured in the plane are, according to Theorems 2.1 and 2.2 of \cite{Thomas2011}, $1_m$, $2_m$, 3 (Fig.~5a of \cite{Thomas2011}), $5_o$, $5_e$, 6 (Fig.~5b of \cite{Thomas2011}), 7, $8_e$ (Fig.~5c of \cite{Thomas2011}), 9 (Fig.~5d of \cite{Thomas2011}), 11 (Fig.~9ab of \cite{Thomas2011}), 13, 15, 17, $18_s$, 19, 21, 22 (Fig.~9cd of \cite{Thomas2011}), 23, 25--27, $28_e$, $28_n$, 29, 30 (Fig.~9ef of \cite{Thomas2011}), 31, and $36_s$. 
Of these, most allow the lattice unit of the striped pattern to be used for a torus. There are two kinds of exception. Patterns of species $1_m$, $2_m$ (Fig.~4ab of \cite{Thomas2011}), $5_e$ (Fig.~4ef of \cite{Thomas2011}), 7 (Fig.~6 of \cite{Thomas2011}), $18_s$, and $26_e$ need to have one of the odd lattice-unit dimensions doubled, and patterns of species $5_o$ (Fig.~4cd of \cite{Thomas2011}) and $26_o$ must have both doubled. As well, the lattice unit for species $36_s$ is interesting because, while its square lattice unit can simply be doubled in both directions, taking it from level 2 to level 4, the square at the intermediate level 3 is sufficient to make the weaving work for a torus, as can be seen in Figures 7 and 8 of \cite{Thomas2011}. 

Order-by-order squares are suitable in all cases because all perfectly 2-colourable fabrics are of even order.

When we turn to more than two colours, we find that everything is as for thick striping but simpler. Given a perfect $c$-colouring by thin striping of an order-$n$ isonemal design, one needs to choose a suitable period parallelogram. Since the design is given, with its various period parallelograms, both $n$-by-$n$ and oblique lattice units of the side-preserving subgroup $H_1$, it is just a question of how many copies are needed. In the  $n$-by-$n$ case, the smallest $mn$-by-$mn$ choice must have $mn$ be the lowest common multiple of $n$ and $c$, since each stripe of the colouring is one cell wide. The size of a lattice unit must be inflated (if at all) similarly. The smallest period parallelograms of any colouring can of course be assembled to make others as large as one pleases.

Another approach would take the number of colours $c$ and the species as given and select the lattice unit appropriate to the species, if any, so that fabrics based on it are perfectly colourable.

Consider the example of Figure~\ref{fig:9ab}, a 3-coloured species-$1_o$ fabric of order $n=30$. The lattice unit shown in Figure~\ref{fig:9ab}a is too small to become a torus with the colouring of Fig.~\ref{fig:9ab}b because it is $3\delta \times 5\delta$. While the pattern repeats with the same colour with a translation of $3\delta$ in the direction perpendicular to the axes, the illustrated lattice unit has to be repeated three times along the axes to bring the colours as well as the pattern into phase, making the torus $3\delta \times 15\delta$. In this case, the $mn$-by-$mn$ square of the first paragraph of the section would be 30 by 30 with area 900 since $m$ can be 1, as against $3\delta \times 15\delta =90$ for the oblique rectangle. More interestingly, the $30\times 30$ square/torus has 10 thin stripes of each colour in each direction, whereas the rectangle/torus has only a single strand of each colour in each direction, each crossing the torus six times on account of the way the strand's ends in the rectangular lattice unit match up in the torus.

Because a woven flat torus is a link in the sense of knot theory, it is of interest how many strands there are --- the same number of each colour in each direction. As an example of what happens, let us pursue the example of the previous paragraph. When the $3\delta\times 5\delta$ $H_1$ lattice unit is replicated to make the $3\delta\times 15\delta$ torus, a translational symmetry subgroup is created, for the successive images of the $3\delta\times 5\delta$ lattice unit, while woven identically, must differ by a cyclic permutation of the colours. The behaviour now being discussed is independent of how the torus is woven but depends solely on the dimensions of the $H_1$ lattice unit. Considering vertical strands and starting with the strand next to the bottom corner of the lattice unit marked in Figure~\ref{fig:9ab}a, they are yellow, red, and blue cyclically left to right. When these three strands, which we can call a tricoloured \emph{band} reaches the upper boundary of the lattice unit, they are identified with the next band to the right by the identification of boundaries, and then the next, and so on until eventually they emerge at the upper right boundary of the third copy of the lattice unit to be identified with the band entering the lower left boundary upward. This is identified with the band with which we began. There is a single band in each direction and so a single strand of each colour.

The multiple-of-three dimensions required for torus formation with colours in phase ensure that the bands of the previous paragraph can be chosen as units that will not be split up as they pass across the torus. The twilly redundancy of the colouring ensures that the colouring of the bands in one direction is a cyclic permutation of the colouring of the strands of the bands in the other direction, but there is no reason for them to be the same.

The $3\delta\times 15\delta$ torus is by no means the only possibility with this lattice unit. The short dimension can be multiplied by any positive integer. Five gives an interesting result for the $15\delta\times 15\delta$ square torus. The five bands that enter the lower right side of the period parallelogram (= the torus) reach its upper right side together, are identified with the five bands entering the lower left side, and rise to the upper left side to be identified with themselves respectively. As happens with any square period parallelogram at $\pi/4$ to the horizontal, the bands --- and so all the strands --- remain separate. There are five bands of each colour in each direction. Any multiple of five gives the same result.

What about non-multiples of five? If the $3\delta\times 15\delta$ period parallelogram is multiplied by 7 to be $21\delta\times 15\delta$, then as the five bands at the lower right boundary rise up through copies of the lattice unit, we know that, as they cross the boundary between the fifth and sixth copies of the lattice unit, they fill it as they did the lower right side --- as they did in the previous paragraph. The multiples like 7 can be taken \emph{modulo} 5 --- or whatever makes a square. Seven gives the same result as two. Crossing only two of the $3\delta\times 15\delta$ rectangles brings the fourth and fifth bands to the left end of the upper left side, with the first, second, and third bands following on. As the permutation
$\left(
\begin{smallmatrix}
1 &2 &3 &4 &5\\
4 &5 &1 &2 &3
\end{smallmatrix}
\right)$
has period 5 like
$\left(
\begin{smallmatrix}
1 &2 &3 &4 &5\\
5 &1 &2 &3 &4
\end{smallmatrix}
\right)$
for just one $3\delta\times 15\delta$, each supposed band passes through each position on its successive crossings of the torus, and so there is actually only one band. The same happens for multiples congruent to 3 and 4 \emph{modulo} 5, making congruence to zero quite special. The primality of 5, the apparent number of bands entering the lower right side, makes one band be the almost uniform answer.

When the apparent $b$ bands entering one side of the smallest period parallelogram, which is an $H_1$ lattice unit perhaps replicated to put the colours in phase, reach the opposite side of the period parallelogram they are permuted by running into a perpendicular side. Call that permutation $p$. A square period parallelogram has actually $b$ distinct bands, the index of the identity in the cyclic subgroup of permutations, in the example $\left(
\begin{smallmatrix}
1 &2 &3 &4 &5\\
5 &1 &2 &3 &4
\end{smallmatrix}
\right)^5$. When $b$ is prime, we have seen that, because the index of the powers of $p$ other than $p^b$ is one, there is only one band. When $b$ is composite there is room for different indices and therefore numbers of bands different from 1 and $b$.

\nocite{*}
\bibliographystyle{amsplain}

\end{document}